%
%
%
%
\documentclass[preprint,12pt]{elsarticle}
\usepackage{amssymb,latexsym,amsmath,amsthm,amsfonts}
\usepackage{enumerate}

\newtheorem{theorem}{Theorem}[section]
\newtheorem{lemma}[theorem]{Lemma}
\newtheorem{corollary}[theorem]{Corollary}
\newtheorem{proposition}[theorem]{Proposition}

\theoremstyle{definition}
\newtheorem{definition}[theorem]{Definition}

\theoremstyle{remark}
\newtheorem{remark}[theorem]{Remark}

\numberwithin{equation}{section}



\newcommand{\BH}{B(\mathcal{H})}

\newcommand{\h}{\mathcal{H}}
\newcommand{\A}{\mathcal{A}}

\begin{document}
\begin{frontmatter}
\title{ Axioms required to get  Wold-Type decomposition}
\author{G. A. Bagheri-Bardi}\author{A. Elyaspour}
\address{Department of Mathematics, Faculty of Sciences, Persian Gulf University, Bushehr 75168, Iran}
\begin{keyword}{}\end{keyword}
\begin{abstract}
Three canonical   decompositions  concerning    commuting pair of isometries, power partial isometries, and contractions  are reassessed.
They have already been proved in von Neuamnn algebras. In the corresponding proofs,  both norm and weak operator topologies  are heavily involved. Ignoring topological structures, we give an algebraic approach to obtain them in the larger category of Baer $*$-rings.
\end{abstract}
\end{frontmatter}
\medskip

\section*{Introduction}\label{sec:0}
Let $\BH$ be the space of  bounded linear operators on a Hilbert space $\h$. Indeed $\BH$ is a particular type of von Neumann algebras.   von Neumann algebras enjoy algebraic, ordering, and topological structures  which make them highly complex. Ignoring the topological structure of von Neumann algebras,   Kaplansky in \cite{kaplansky} introduced a  larger category, called Baer $*$-rings. It  raised  the following serious  challenge.\medskip\\
{\bf Question.} {\it What  fundamental results in von Neumann algebras can be extended to the category of Baer $*$-rings?}\medskip\\
Any result in  Baer $*$-rings should be representation free i.e, independent of the underlying Hilbert space. So, any achievement in this field will be significant  and worth considering.
In two successful efforts in this regard \cite{BEE1,BEE2}, three fundamental decompositions concerning operators in $\BH$, listed below,  have been algebraically accomplished so that one may extend them to  Baer $*$-rings.

\begin{itemize}
\item (Wold \cite{Wold}) \emph{Every isometric operator on  $\h$ is uniquely decomposed to the direct sum of a unitary and a unilteral shift operator.}
\item (Halmos-Wallen \cite{Halmos}) \emph{Every power partial isometric operator on $\h$ is uniquely   decomposed to a direct sum whose summands are  unitary operators, unilateral shifts, back-ward shifts, and truncated shifts.}
\item (Nagy-Fioas-Langer \cite{Nagy}) {ٍ\it Every contraction operator on $\h$ is uniquely decomposed to a direct sum whose summands are unitary and completely non-unitary operators.}
\end{itemize}

Extension of each of the above-mentioned decompositions to relevant commuting pairs in $\BH$ results in the emergence of some interestingly challenging problems. We mention some cases that  are going to be dealt with in this current discussion.
Based on the Wold decomposition, for a  given commuting pair of isometric operators $(T_1,T_2)$ on $\h$, it might appear that there exist fourfold orthogonal subspaces $\h_{uu}, \h_{uu}, \h_{uu},$ and $\h_{ss}$  with
\[ T_n=T_{n_{|_{\h_{uu}}}}\oplus T_{n_{|_{\h_{us}}}}\oplus T_{n_{|_{\h_{su}}}}\oplus T_{n_{|_{\h_{ss}}}},\]
where the summands  are all either unitary or unilateral shift operators.
That is not always the case. Slosincki  characterized those commuting pairs of isometric operators enjoying  such a decomposition  \cite{Slocinski}, known as \emph{Wold-Slocinski} decomposition. Two decades later,  using dilation theory, Popovici suggested a decomposition that worked for every commuting pair of isomeric operators \cite{Popovici}. Extension of Halmos-Wallen  [resp. Nagy-Fioas-Langer] decomposition to a commuting pair of power partial isometric  [resp. contractive] operators, which is rather straightforward, was accomplished by Burdak too \cite{Burdak1}.


Results of this  work are listed belw.
\begin{itemize}
\item[{R1)}] For a given commuting pair of isometries in a Baer $*$-ring, a comprehensive discussion is given to specify
pairs enjoying decomposition whose summands are either unitary or unilateral shift. It extends  Wold-Slocinski decomposition to Baer $*$-rings. Our characterizations  paint a    much clearer picture  than what is given in \cite{Slocinski}. A progress upon  a general commuting pairs of isometries is also given, which supports  what has been done in \cite{BEE1}.
\item[{R2)}]   Halmos-Wallen and Nagy-Fioas-Langer decompositions, concerning a pair of commuting  power partial isometric and contractive operators,  are extended to Baer $*$-rings.
\end{itemize}

This paper is organized as follows. In the next section,  all required facts of Baer $*$-rings are assembled to yield the main results. The main results are given in section two.


\section{Preliminaries}
Let $\A$ be a $*$-ring, that is, a ring which admits a linear involution.
An element $x\in\A$ is called a {\it projection} [resp. {\it  a partial isometry/ an isometry}] if $x$ is a selfadjoint idempotent [resp. $xx^*x=x$/ $x^*x=1$]. If  $x^*$ is an isometry, then we call $x$ a {\it co-isometry}. When $x$ is both an isometry and a co-isometry, we say that $x$ is a {\it unitary}. For a given  partial isometry $x$ both $x^*x$ and $xx^*$ are projections and called the initial and final projection of $x$ respectively. We say   $x$  is   positive, written $x\geq0$, if  $x=\sum_1^nx^*_ix_i$ for some  $x_1,..., x_n\in \A$. The set of positive elements of $\A$, denoted by $\A_+$, form a  cone.

The left [resp. right] annihilator of a subset $S$ of $\A$ is denoted with $L(S)$
[resp. $R(S)$]. A $*$-ring is a Baer $*$-ring  if the right annihilator of every  nonempty subset of $S$
is generated, as a right ideal, by a projection $p$, that
is, $R(S)=p\A$. In this case $\A$ is unital.
\textbf{Throughout we assume $\A$ is a Baer $*$-ring.}
Now,  we list required  facts in Baer *-rings  to conclude our assertions. 
For more details, we refer to  \cite{Berb1,Berb2}.

For a given projection $p\in \A$, the corner subring  $\A_p:=p\A p$ is a   Baer $*$-ring as well. The involution of $\A$ is proper that is $aa^*=0$ implies $a=0$.
For  given projections $p$ and $q$ in $\A$, we write   $p \leq q$  to say  $pq=p$. 
We recall that the infimum [resp. supremum] of a family of projections $\{p_i\}$ is the largest [resp. smallest] projection $p$
with $p\leq  p_i$ [resp. $p_i\leq  p$]. The fact that any family of projections of $\A$ attain its infimum and  supremum in $\A$ plays the key role here. We denote the left projection of $a$ by $[a]$   that is the  smallest projection satisfying in $[a]a=a$.
For every  $b\in \A$,  we have  $ab=0$ if and only if $a[b]=0$.


There are some notions and notations that are applied in our work, but they do not appear in \cite{Berb1,Berb2}. We highlight them.

Let $x$ be in $\A$. For a given projection $p\in \A$, we say that $p$ is $x$-invariant if $xp=pxp$. Obviously $p$ commutes with $x$ if and only if $p$ is invariant under both $x$ and $x^*$

Taking direct sum of a family of pairwise orthogonal projections  $Q=\{p_i\}_{i\in I}$   makes senses. Indeed,
\[\bigoplus_{i\in I}p_i=\sup_{I_0\subseteq I}\sum_{i\in I_0} p_{i}\]
where the supremum is taken over all finite subsets $I_0\subseteq I$. We say $Q$ forms a \emph{basis}
if $\bigoplus_{i\in I}p_i=1$. When $Q=\{p_i\}_{i\in I}$ is a basis with $xp_i=p_ix$ for all $i\in I$,  then
$x$ is formally written  by the  direct sum   \[x=\bigoplus_{i\in I} xp_i.\] In this case, we say that $x$ is $Q$-\emph{block diagonalizable} whose diagonal blocks are  the summands $xp_i$'s.

Algebraic extension of the notion of  unilateral shift from the space of bounded linear operators  to  Baer *-rings was the milestone   in formation of the  discussion. It needs to be reviewed. For any isometry $x$ and projection $q$ in $\A$,  the following key identity holds,
\[[xq]=xqx^*.\]
This point yields that  for every isometry $x\in \A$, the following chain of projections are pairwise orthogonal.
\[1-[x]\curvearrowright [x(1-[x])]\curvearrowright\cdots[x^n(1-[x])]\curvearrowright\cdots.\]
In  concrete case, when $x$ is an isometric operator on a Hilbert space, $1-[x]$ is just the projection onto the kernel of the operator $x^*$. It is  called the wandering subspace of $x$.
It is proved in \cite{BEE1} that $x$ commutes with $p_s$  where $p_s=\sum_{n\geq0}[x^n(1-[x])]$  . We say that $x$ is a \emph{unilateral shift} provided that  $p_s=1$. If $x^*$ is a unilateral shift then $x$ is called a backward shift.


We end this section with a note that is interesting  per se. It determines a gap between von Neumann algebras and Baer $*$-rings.



Clearly  every projection is itself a positive element. If $p$ and $q$ are projections in $\BH$ then $q-p\geq0$ if and only if $p\leq  q$. In the following remark, we show this fact may no longer valid in Baer $*$-rings.

\begin{remark}\begin{enumerate}[{(1)}]
\item{} On the set of projections of $\A$, $p\leq  q$ clearly implies that $q-p\geq0$. The converse does not hold in general. To see this, let us consider the Baer $*$-ring $M_2(\mathbb{F}_3)$, the space of $2\times 2$ matrices over the field $\mathbb{F}_3=\{0,1,2\}$. For  projections
$p=\textrm{diag}\{1,0\}$ and $q=\textrm{diag}\{0,1\}$, we have that
\[q-p=\textrm{diag}\{2,1\}=p+p+q.\]
Thus $q-p\geq0$ but $p\neq qp=0$.
\item{} Let us suppose that for every $a$ in $\A$, the following occurs.
\[ a\geq0~ \textrm{and} ~-a\geq0 \Rightarrow a=0.\] 
Let $p$ and $q$ be projections with $q-p\geq0$. Thus, there exists finitely many elements $a_1,\cdots,a_n$ with $q-p=\sum a^*_ia_i$.
\begin{align*}
-(1-q)p(1-q)&=(1-q)(q-p)(1-q)
          \\&=(1-q)(\sum_1^n a^*_ia_i)(1-q)\geq 0.
\end{align*}
Note that $(1-q)p(1-q)\geq0$, so the assumption implies that $(1-q)p(1-q)=0$. Since the involution is proper, $p(1-q)=0$. Hence $p\leq  q$.
\end{enumerate}
\end{remark}


\section{Decomposition of a commuting pair of isometries.}


We refer to  Wold decomposition of an isometry in Baer $*$-rings because it delineates the outline well.

\begin{theorem}{\bf (Wold decomposition in Baer $*$-rings \cite{BEE1})}\label{T1.1}
Let $x$ be an isometry in $\A$. Then $x$ is uniquely decomposed to the direct sum of a unitary and  a unilateral shift. Indeed, we have
\[x=xp_u\oplus xp_s, \]
where $ p_u=\inf_{n\geq0} [x^n]$  and $p_s=\sum_{n\geq0}[x^n(1-[x])].$

\end{theorem}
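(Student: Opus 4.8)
The plan is to exhibit the two complementary projections $p_u$ and $p_s$ explicitly, to show they commute with $x$, and then to verify that $xp_u$ and $xp_s$ are respectively a unitary and a unilateral shift inside the corner subrings $\mathcal{A}_{p_u}$ and $\mathcal{A}_{p_s}$. The decisive step is a telescoping identity among left projections. Since $x$ is an isometry, so is each $x^n$, and applying the key identity $[yq]=yqy^*$ with $q=1$ to the isometry $y=x^n$ gives $[x^n]=x^n(x^*)^n$; these form a decreasing chain, so $p_u=\inf_{n\ge 0}[x^n]\le [x]$. Writing $e_n:=[x^n(1-[x])]$ and applying the key identity to the isometry $x^n$ and the projection $1-[x]$ yields
\[ e_n = x^n(1-[x])(x^*)^n = x^n(x^*)^n - x^{n+1}(x^*)^{n+1} = [x^n]-[x^{n+1}]. \]
Hence the partial sums telescope, $\sum_{n=0}^{N-1}e_n = 1-[x^N]$, and passing to the supremum over $N$ (using completeness of the projection lattice and the orthocomplement law $\sup_N(1-[x^N]) = 1-\inf_N[x^N]$) gives the clean relation $p_s = \sum_{n\ge 0}e_n = 1-p_u$. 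Thus $p_u$ and $p_s$ are orthogonal and sum to $1$, and since $x$ commutes with $p_s$ by the fact quoted from \cite{BEE1}, it also commutes with $p_u=1-p_s$.

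Next I would treat the two corners separately. For the unitary block set $u=xp_u$; then $u^*u=p_u x^*x p_u = p_u$, and using $xp_u=p_u x$ together with $p_u\le[x]$ one gets $uu^*=p_u xx^* = p_u[x]=p_u$, so $u$ is unitary in $\mathcal{A}_{p_u}$. For the shift block set $v=xp_s$; again $v^*v=p_s$, its left projection in the corner is $vv^*=p_s[x]$, and since $p_u\le[x]$ one computes $p_s-[v]=p_s(1-[x])=1-[x]$. Because $x$ commutes with $p_s$ one has $v^n=x^n p_s$ and $v^n(p_s-[v])=x^n(1-[x])$, whose left projection (taken in the corner) is exactly $e_n$. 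Summing gives $\sum_{n\ge 0}[v^n(p_s-[v])]=\sum_n e_n=p_s$, which is precisely the defining condition that $v$ be a unilateral shift in $\mathcal{A}_{p_s}$.

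For uniqueness, suppose $x=xq_u\oplus xq_s$ is any decomposition of the asserted type. If $xq_u$ is unitary in $\mathcal{A}_{q_u}$, then each power $x^nq_u$ is unitary there as well, so $q_u[x^n]=q_u$, i.e. $q_u\le[x^n]$ for all $n$, whence $q_u\le\inf_n[x^n]=p_u$. Dually, if $xq_s$ is a unilateral shift, then its Wold unitary projection, computed by the same telescoping inside $\mathcal{A}_{q_s}$, is $\inf_n[(xq_s)^n]=\inf_n\bigl(q_s[x^n]\bigr)=q_s p_u$, and this must vanish, forcing $q_s\le 1-p_u=p_s$. Combined with $q_u+q_s=1=p_u+p_s$, the inequalities $q_u\le p_u$ and $q_s\le p_s$ force equality, giving $q_u=p_u$ and $q_s=p_s$.

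I expect the main obstacle to lie in the corner computations rather than in the telescoping identity itself. One must justify carefully that quantities computed inside $\mathcal{A}_{p_s}$ (resp. $\mathcal{A}_{q_s}$) agree with their ambient counterparts: concretely, that the left projection of an element lying below $p_s$ is the same whether taken in the corner or in $\mathcal{A}$, and that multiplying the decreasing chain $\{[x^n]\}$ by the commuting projection $q_s$ commutes with forming the infimum (this last being the distributivity $\inf_n(q_s\wedge[x^n])=q_s\wedge\inf_n[x^n]$, valid because $q_s$ commutes with every $[x^n]$). These are purely lattice- and annihilator-theoretic facts about Baer $*$-rings, but they carry the whole argument and deserve explicit verification precisely because no topology is available to fall back on.
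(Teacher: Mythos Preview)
The paper does not give its own proof of this theorem: it is quoted verbatim from \cite{BEE1} as background, with no argument supplied here. So there is nothing in the present paper to compare your approach against.

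That said, your proof is correct and is essentially the natural one. The telescoping identity $e_n=[x^n]-[x^{n+1}]$ (which follows immediately from $[x^nq]=x^nq x^{*n}$) is exactly the mechanism that makes $p_s=1-p_u$, and your corner computations for the unitary and shift blocks are sound. The lattice subtleties you flag at the end---that left projections agree in the corner and the ambient ring, and that a projection commuting with every member of a family commutes with its infimum---are indeed the only places where care is needed; both are standard Baer $*$-ring facts (the latter is \cite[Prop.~4.5]{Berb2}, which the paper itself invokes elsewhere). Your uniqueness argument via $q_u\le p_u$ and $q_s\le p_s$ is also the expected one.
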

We call the set of projections $\{p_u,p_s\}$  \emph{the Wold-basis} of $x$ in the sequel. The  theorem above  can be rewritten in this form: The Wold-basis $\{p_u,p_s\}$ diagonalizes   $x$
whose diagonal blocks  are either  unitary or unilateral shift. This motivation  causes  the following question.  \medskip \\
{\bf Question.} \emph{Let $(x_1,x_2)$ be a commuting pair of isometries in $\A$. Does there exist any basis $Q$  diagonalizing  both $x_1$ and $x_2$ simultaneously   such that the diagonal blocks are either unitary or unilateral shift elements?
}
\medskip \\
To address   this question, we put
\begin{itemize}
\item $p_{uu}$  [resp. $p_{ss}$] as the largest projection in $\A$ commuting with both $x_j$s such that the compression of $x_j$s to $p_{uu}$ [resp. $p_{ss}$] are unitary elements  [resp. unilateral shifts].
\item $p_{us}$ as the largest projection in $\A$ commuting with $x_j$s such that the compression of $x_1$ [resp. $x_2$] to $p_{us}$ is a unitary [resp. unilateral shift].
Similarly the projection $p_{su}$ is defined.
\end{itemize}
Let us denote,
\[\mathcal{Q}=\{p_{uu},p_{us},p_{su},p_{ss}\}.\]
At first glance,  $\mathcal{Q}$   appears to   fulfill  the basis mentioned in the  above question. Perusing  the following items in detail directs us to the goal.

\begin{itemize}
\item[{i)}] In general, $\mathcal{Q}$ may not  suffice  to diagonalize the pair $(x_1,x_2)$. 
\item[{ii)}] It will be fully characterized whenever  $\mathcal{Q}$ may form a basis in $\A$.

\end{itemize}

\begin{definition}
A commuting pair of isometries $(x_1,x_2)$  enjoy \emph{Wold-Slocinski}  decomposition if
\[x_i=\bigoplus_{\alpha,\beta\in \{u,s\}} x_ip_{\alpha\beta}, \mspace{20mu} (i=1,2).\]
\end{definition}
There is a spectrum of different situations resulting in  pairs of commuting isometries not enjoying Wold-Slocinski decomposition for which all the probabilities are feasible.  
The following theorem characterizes those commuting pairs  enjoying Wold-Slocinski decomposition.
\begin{remark}\label{us} For a given isometry $x\in \A$, let $q$ be a commuting projection with $x$. Suppose that $q\leq  p_{u}$ [resp. $q\leq  p_{s}$]. Then $xq$  is a unitary [resp. unilateral shift] in the corner subring $\A_{p_u}$ [resp. $\A_{p_s}$] (see \cite{BEE1}).
\end{remark}

\begin{theorem}\label{T1}
For a given  commuting pair of isometries $(x_1,x_2)$, let $\{p^{(i)}_u,p^{(i)}_s\}$ be the Wold-basis associated with $x_i$. The following statements are equivalent.
\begin{enumerate}
\item The pair $(x_1,x_2)$ enjoys the Wold-Slocinski decomposition.
\item  The following formulas hold.
\[p_{uu}=p^{(1)}_up^{(2)}_u,~ p_{us}=p^{(1)}_up^{(2)}_s,~ p_{su}=p^{(1)}_sp^{(2)}_u,~ p_{ss}=p^{(1)}_sp^{(2)}_s.\]
\item $x_1p^{(2)}_{\alpha}=p^{(2)}_{\alpha}x_1$ and $x_2p^{(1)}_{\alpha}=p^{(1)}_{\alpha}x_2$ where $\alpha\in\{u,s\}$.
\item $p^{(1)}_s$ is $x_2$-invariant and $p^{(2)}_s$ is $x_1$-invariant.
\item  $x_1p^{(2)}_{\alpha}=p^{(2)}_{\alpha}x_1$ and  $x_2$ commutes with either $p^{(1)}_{u}p^{(2)}_s$ or $p^{(1)}_{s}p^{(2)}_s$.
\item Both projections $p^{(1)}_s$ and $p^{(2)}_s$ are $x_1x_2$-invariant.
\end{enumerate}
\end{theorem}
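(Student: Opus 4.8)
My plan is to split the six conditions into a \textbf{commutation block} $\{(1),(2),(3)\}$ and a \textbf{one-sided block} $\{(4),(5),(6)\}$, prove the first block equivalent by a short cycle, and then do the genuine work: upgrading the weaker invariance hypotheses in the second block to the full commutation of (3). Throughout I abbreviate the Wold projections by $p^{(i)}_\alpha$ and write $p^\perp$ for $1-p$.

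\textbf{Easy block.} First I would record the universal bounds $p_{uu}\le p^{(1)}_u\wedge p^{(2)}_u$, $p_{us}\le p^{(1)}_u\wedge p^{(2)}_s$, and so on: if a projection $q$ commutes with $x_1$ and the compression $x_1q$ is unitary in $\A_q$, then maximality of the unitary part in Theorem~\ref{T1.1} forces $q\le p^{(1)}_u$, and dually for the shift part, so each $p_{\alpha\beta}$ lies beneath the corresponding infimum. For $(3)\Rightarrow(2)$ I observe that under (3) the element $x_2$ commutes with $p^{(1)}_\alpha$ and with $x_2^*$, hence with the Wold projections $p^{(2)}_\beta$ as well (these are built from $x_2$ by left projections and suprema, so anything commuting with $x_2,x_2^*$ fixes them); thus every product $p^{(1)}_\alpha p^{(2)}_\beta$ is a genuine projection commuting with both $x_1,x_2$, and Remark~\ref{us} identifies its compressions, so it equals the corresponding $p_{\alpha\beta}$. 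The same commutations make $\{p^{(1)}_\alpha p^{(2)}_\beta\}$ an orthogonal basis, which is exactly (1); conversely a Wold--Slocinski decomposition exhibits $p^{(1)}_u=p_{uu}+p_{us}$ and $p^{(1)}_s=p_{su}+p_{ss}$ as sums of projections commuting with $x_2$ (uniqueness of the Wold decomposition of $x_1$ forces this grouping), giving back (3). This closes $(1)\Leftrightarrow(2)\Leftrightarrow(3)$, and $(3)\Rightarrow(4),(5),(6)$ are immediate since (3) is commutation while (4),(5),(6) only ask for one-sided invariance.

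\textbf{Hard block.} The substance is the reverse implications. I would first dispose of $(4)\Rightarrow(6)$: since $p^{(1)}_s$ commutes with $x_1$ and is $x_2$-invariant, a one-line manipulation gives it $x_1x_2$-invariant, and symmetrically for $p^{(2)}_s$, so it suffices to prove $(6)\Rightarrow(3)$ and, separately, $(5)\Rightarrow(3)$. For $(6)\Rightarrow(3)$, set $p=p^{(1)}_s$ and $p^\perp=p^{(1)}_u$. The $x_1x_2$-invariance reads $p^\perp x_1x_2\,p=0$; using $x_1p=px_1$ and $x_1x_2=x_2x_1$ this rearranges to $(p^\perp x_2 p)\,x_1=0$, and right-multiplying by $x_1^*$ (legitimate because $x_1$ is an isometry, so cancellation is available) shows $p^\perp x_2 p$ is annihilated on the right by $[x_1]$. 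Since $1-[x_1]\le p$ is the wandering projection of $x_1$, this reduces the target identity $p^\perp x_2 p=0$ to the single equation $p^{(1)}_u\,x_2\,(1-[x_1])=0$ at the bottom of the shift tower $p^{(1)}_s=\sum_{n\ge0}[x_1^n(1-[x_1])]$. I would then propagate this vanishing up the tower, feeding in the $x_1x_2$-invariance of $p^{(2)}_s$ to control how $x_2$ moves the wandering projection, and conclude $p^\perp x_2 p=0$; the adjoint together with Wold-commutation upgrades this to full commutation of $p^{(1)}_s$ with $x_2$, and symmetrically for $p^{(2)}_s$ with $x_1$, which is (3). The argument for $(5)\Rightarrow(3)$ is parallel: the first half of (5) is already half of (3) and makes $p^{(2)}_s$ central for both isometries, so the assumed commutation of $x_2$ with one of $p^{(1)}_up^{(2)}_s$, $p^{(1)}_sp^{(2)}_s$ localizes the problem to a single corner where the same propagation applies.

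The main obstacle is exactly this upgrade from one-sided invariance to two-sided commutation, i.e.\ killing the \emph{opposite} off-diagonal block $p\,x_2\,p^\perp$ once $p^\perp x_2\,p$ is under control. In the Hilbert-space proofs one passes to wandering subspaces and takes norm or weak-operator limits along the shift tower; here those tools are unavailable, so the delicate point is to run the tower argument purely algebraically, relying only on the cancellation furnished by left-invertibility of isometries and the proper involution of $\A$, together with the fact that $x_1$ has \emph{no} unitary part inside $p^{(1)}_s$. I expect the bookkeeping of how $x_2$ intertwines the towers of $x_1$ and of $x_2$ to be the most technical step.
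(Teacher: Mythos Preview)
Your treatment of the block $(1)\Leftrightarrow(2)\Leftrightarrow(3)$ is essentially the paper's argument and is fine.

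The gap is in your ``hard block''. You have missed the single observation that carries the whole weight in the paper: for \emph{any} commuting pair of isometries, the unitary projection $p^{(1)}_u$ is \emph{automatically} $x_2$-invariant. Indeed, conjugation by the isometry $x_2$ commutes with infima of projections, so
\[
x_2\,p^{(1)}_u\,x_2^*
=\inf_n x_2\,x_1^n x_1^{*n}\,x_2^*
=\inf_n x_1^n\,x_2x_2^*\,x_1^{*n}
\le \inf_n x_1^n x_1^{*n}=p^{(1)}_u,
\]
whence $p^{(1)}_u x_2 p^{(1)}_u=x_2 p^{(1)}_u$. This immediately gives $(4)\Rightarrow(3)$: hypothesis (4) supplies invariance of $p^{(1)}_s$ under $x_2$, and invariance of its complement $p^{(1)}_u$ is free, so $x_2$ commutes with $p^{(1)}_\alpha$; symmetrically for $x_1$ and $p^{(2)}_\alpha$. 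What you flag as ``the main obstacle'' --- killing the opposite off-diagonal block $p\,x_2\,p^\perp$ --- is therefore not an obstacle at all, and no tower propagation is needed. Your reduction of $(6)$ to the equation $p^{(1)}_u x_2(1-[x_1])=0$ is correct as far as it goes, but you never explain how the $x_1x_2$-invariance of $p^{(2)}_s$ would yield it; the paper bypasses this entirely by observing that $(6)\Rightarrow(4)$ is a one-liner: left-multiply $x_1x_2\,p^{(1)}_s=p^{(1)}_s x_1x_2\,p^{(1)}_s$ by $x_1^*$ and use $x_1^*p^{(1)}_s=p^{(1)}_s x_1^*$.

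For $(5)\Rightarrow(3)$ you are similarly vague. The paper's mechanism is not ``the same propagation'' but a different idea: the first hypothesis of (5) lets one pass to the corner $\A_{p^{(2)}_u}$, where $x_2p^{(2)}_u$ is unitary, so the pair $(x_1p^{(2)}_u,\,x_2p^{(2)}_u)$ is \emph{doubly} commuting there; hence $x_2$ automatically commutes with the Wold projections $p^{(1)}_\alpha p^{(2)}_u$ of $x_1p^{(2)}_u$. Combining this free commutation on the $p^{(2)}_u$-side with the assumed commutation of $x_2$ with either $p^{(1)}_u p^{(2)}_s$ or $p^{(1)}_s p^{(2)}_s$ (and using $p^{(2)}_u+p^{(2)}_s=1$) yields commutation of $x_2$ with $p^{(1)}_\alpha$.
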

\begin{proof}
$(1)\rightarrow(3)$: Suppose that $(x_1,x_2)$ enjoys Wold-Slocinski decomposition that is $p_{uu}+p_{us}+p_{su}+p_{ss}=1$. It yields the following formulas.
\[p^{(1)}_u=p_{uu}+p_{us},~ p^{(1)}_s=p_{su}+p_{ss},~ p^{(2)}_u=p_{uu}+p_{su}~ \textrm{and},~ p^{(2)}_s=p_{us}+p_{us}.  \]
Thus $x_2$ [resp. $x_1$] commutes with $p^{(1)}_\alpha$ [resp. $p^{(2)}_{\alpha}$], since both $x_j$s commutes with all $p_{\alpha\beta}$s.
\medskip \\
$(3)\to (2)$:
Note that the assumptions imply that $[x^n_1]p^{(2)}_{\alpha}=p^{(2)}_{\alpha}[x^n_1]$ and $[x^n_2]p^{(1)}_{\alpha}=p^{(1)}_{\alpha}[x^n_2]$ for every $n$. Applying \cite[Prop. 4.5]{Berb2}, we get the identities $p^{(1)}_{\alpha}p^{(2)}_{\beta}=p^{(2)}_{\beta}p^{(1)}_{\alpha}$ and $p^{(2)}_{\alpha}p^{(1)}_{\beta}=p^{(1)}_{\beta}p^{(2)}_{\alpha}$.  Thus, all products $p^{(1)}_{\alpha}p^{(2)}_{\beta}$ are projections. Note that  $p^{(1)}_{\alpha}p^{(2)}_{\beta}\leq  p^{(1)}_{\alpha}$  and $p^{(1)}_{\alpha}p^{(2)}_{\beta}\leq  p^{(2)}_{\beta}$. Remark (\ref{us}) confirms that $p^{(1)}_{\alpha}p^{(2)}_{\beta} \leq  p_{\alpha\beta}$. Moreover,
\[p_{\alpha\beta}\leq  \inf\{p^{(1)}_{\alpha},p^{(2)}_{\beta}\}=p^{(1)}_{\alpha}p^{(2)}_{\beta}.\]
Consequently  $p_{\alpha\beta}=p^{(1)}_{\alpha}p^{(2)}_{\beta}$.
\medskip\\
$(2)\to(1)$ A direct calculation leads us to obtain the assertion.
\[\sum_{\alpha,\beta\in \{u,s\}} p_{\alpha\beta}=\sum_{\alpha,\beta\in \{u,s\}} p^{(1)}_{\alpha}p^{(2)}_{\beta}=1.\]
\medskip\\
$(3)\leftrightarrow(4)$. The implication $(3)\to (4)$ is clear. For the converse note that
\[
  x_2p^{(1)}_u=p^{(1)}_u x_2  \Leftrightarrow \begin{cases}
x_2p^{(1)}_u=p^{(1)}_ux_2p^{(1)}_u\\
x_2p^{(1)}_s=p^{(1)}_sx_2p^{(1)}_s
\end{cases}.\]
Based on the assumption,  the identity $x_2p^{(1)}_u=p^{(1)}_ux_2p^{(1)}_u$ only needs to be verified.
\[x_2p^{(1)}_u x_2^*=x_2\inf x_1^nx_1^{*n}x_2^*=\inf x_1^nx_2x_2^*x_1^{*n}\leq  \inf x_1^nx_1^{*n}=p^{(1)}_u.\]
It means that $x_2p^{(1)}_u x_2^*=p^{(1)}_ux_2p^{(1)}_u x_2^*$. Multiplying $x_2$ from the right hand gets  $x_2p^{(1)}_u=p^{(1)}_ux_2p^{(1)}_u$. The other assertion is similarly obtained.  \medskip \\
$(4)\leftrightarrow(6)$: Suppose that the assumptions given in (4) hold. Then
\[x_1x_2p^{(1)}_s=x_1\big(p^{(1)}_sx_2p^{(1)}_s\big)=p^{(1)}_sx_1x_2p^{(1)}_s.\]
Similarly, one may check that $p^{(2)}_s$ are $x_1x_2$-invariant. To see the converse, let us multiply  $x_1^*$ from the left  to the identity $x_1x_2p^{(1)}_s=p^{(1)}_sx_1x_2p^{(1)}_s$. It gets $x_2p^{(1)}_s=p^{(1)}_sx_2p^{(1)}_s$ which means that $p^{(1)}_{s}$ is $x_2$-invariant. The other one is obtained similarly.\medskip \\
$(3)\leftrightarrow(5)$:
Obviously $(3)$ directs $(5)$. Now, suppose that $(5)$ holds. One may check directly that $p^{(1)}_up^{(2)}_u$ is  the largest projection commuting with  the isometry $x_1p^{(2)}_u$ such that the compression of  $x_1p^{(2)}_u$ to $p^{(1)}_up^{(2)}_u$  is a unitary element.
On the other hand,  $(x_1p^{(2)}_u,x_2p^{(2)}_u)$ is a doubly commuting pair of unitary elements.  Applying these two points yield that the projection $p^{(1)}_up^{(2)}_u$ commutes with  $x_2p^{(2)}_u$. It gets $x_2p^{(1)}_up^{(2)}_u=p^{(1)}_up^{(2)}_u x_2$. Combination this point with the assumption, we obtain that
\[\begin{cases}
x_2p^{(1)}_up^{(2)}_u=p^{(1)}_up^{(2)}_u x_2, \\
x_2p^{(1)}_up^{(2)}_s=p^{(1)}_up^{(2)}_s x_2.
\end{cases}\]
It is equivalent to say that  $x_2$ commutes with $p^{(1)}_{\alpha}$. It finishes the proof.
\end{proof}
\begin{corollary}
Let  $(x_1,x_2)$ be a commuting pair of isometries in $\A$.  Then $(x_1,x_2)$ enjoys Wold-Slocinski decomposition   if and only if both
$(x_1,x_1x_2)$  and $(x_2,x_1x_2)$ enjoy Wold-Slocinski  decompositions.
\end{corollary}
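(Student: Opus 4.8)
The plan is to translate each of the three Wold--Slocinski statements into the invariance conditions of Theorem~\ref{T1} and then to reduce the whole equivalence to a single claim about the product $x_1x_2$. Write $\mathrm{WS}(y_1,y_2)$ for the assertion that a commuting pair $(y_1,y_2)$ enjoys the Wold--Slocinski decomposition, and let $\{p^{(12)}_u,p^{(12)}_s\}$ be the Wold-basis of the isometry $x_1x_2$. Reading part (6) of Theorem~\ref{T1} for $(x_1,x_2)$, and part (4) for the two pairs $(x_1,x_1x_2)$ and $(x_2,x_1x_2)$, I obtain
\[
  \mathrm{WS}(x_1,x_2)\iff \big[\,p^{(1)}_s\ x_1x_2\text{-invariant}\,\big]\wedge\big[\,p^{(2)}_s\ x_1x_2\text{-invariant}\,\big],
\]
\[
  \mathrm{WS}(x_1,x_1x_2)\iff \big[\,p^{(1)}_s\ x_1x_2\text{-invariant}\,\big]\wedge\big[\,p^{(12)}_s\ x_1\text{-invariant}\,\big],
\]
\[
  \mathrm{WS}(x_2,x_1x_2)\iff \big[\,p^{(2)}_s\ x_1x_2\text{-invariant}\,\big]\wedge\big[\,p^{(12)}_s\ x_2\text{-invariant}\,\big].
\]
Denoting the two conditions in the first line by $P$ and $Q$, and the invariance of $p^{(12)}_s$ under $x_1$ and under $x_2$ by $R$ and $S$, the corollary becomes the logical equivalence $P\wedge Q\iff P\wedge Q\wedge R\wedge S$. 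One implication is trivial, so everything reduces to proving $P\wedge Q\Rightarrow R\wedge S$: \emph{if $(x_1,x_2)$ enjoys the Wold--Slocinski decomposition, then $p^{(12)}_s$ commutes with both $x_1$ and $x_2$.}

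To prove this I would compute the unitary projection $p^{(12)}_u$ of the product. Commutativity gives $(x_1x_2)^n=x_1^nx_2^n$, hence
\[
  (x_1x_2)^n(x_1x_2)^{*n}=x_1^n[x_2^n]x_1^{*n}\leq x_1^nx_1^{*n}=[x_1^n],
\]
because $[x_2^n]\leq 1$. Taking the infimum over $n$ yields $p^{(12)}_u\leq p^{(1)}_u$, and the symmetric estimate gives $p^{(12)}_u\leq p^{(2)}_u$, so that $p^{(12)}_u\leq p^{(1)}_up^{(2)}_u=p_{uu}$, the last equality being part (2) of Theorem~\ref{T1}. For the reverse inclusion, on the corner $p_{uu}$ the elements $x_1p_{uu}$ and $x_2p_{uu}$ are commuting unitaries of $\A_{p_{uu}}$, so $x_1x_2p_{uu}$ is unitary there and $p_{uu}$ commutes with $x_1x_2$; the maximality in Theorem~\ref{T1.1} then forces $p_{uu}\leq p^{(12)}_u$. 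Consequently $p^{(12)}_u=p_{uu}$ and $p^{(12)}_s=1-p_{uu}=p_{us}+p_{su}+p_{ss}$.

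The conclusion is now immediate: under $P\wedge Q$ each summand $p_{\alpha\beta}$ commutes with $x_1$ and $x_2$ by part (3) of Theorem~\ref{T1}, so their sum $p^{(12)}_s$ does as well, and commuting with an element is stronger than being invariant under it; thus $R$ and $S$ hold. I expect the only genuine obstacle to be the identity $p^{(12)}_u=p_{uu}$: the inequality $p^{(12)}_u\leq p_{uu}$ follows from the monotonicity estimate above, but the reverse must be extracted from the maximal/unitary characterization of the Wold projection rather than by direct algebra, and one must be careful to invoke $P\wedge Q$ precisely where the commuting decomposition $p_{\alpha\beta}=p^{(1)}_\alpha p^{(2)}_\beta$ is used.
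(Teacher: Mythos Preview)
Your proposal is correct and is exactly the kind of argument the paper has in mind: the corollary is stated immediately after Theorem~\ref{T1} without proof, so it is meant to follow by translating each of the three Wold--Slocinski statements through the equivalent conditions (4) and (6) of that theorem, precisely as you do. Your reduction to the implication $P\wedge Q\Rightarrow R\wedge S$ and the identification $p^{(12)}_u=p_{uu}$ are the right moves.

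One small remark: the paper later records (in the remark preceding Proposition~2.6 and again in its proof, citing \cite{BEE1}) that the unitary part of $x_1x_2$ is \emph{always} $p_{uu}$, i.e.\ $p^{(12)}_u=p_{uu}$ holds for every commuting pair of isometries, not only under the Wold--Slocinski hypothesis. Since $p_{uu}$ commutes with both $x_i$ by its very definition, the conditions $R$ and $S$ are in fact automatic, and the equivalence $P\wedge Q\Leftrightarrow(P\wedge R)\wedge(Q\wedge S)$ becomes a tautology. So your detour through part~(2) of Theorem~\ref{T1} to establish $p^{(12)}_u\le p^{(1)}_up^{(2)}_u=p_{uu}$ is not strictly necessary, though it is perfectly valid under the hypothesis $P\wedge Q$ you are assuming at that point.
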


To find a remedy for commuting pairs not enjoying the Wold-Slocinski decomposition, let us put,
\[p_{ws}= 1-(p_{uu}+p_{us}+p_{su}).\]
The projection $p_{ws}$ is noticeably larger than $p_{ss}$ in general case. As  Example 3.9 in \cite{Popovici} shows,  $p_{ss}$ may be even vanish.
To extend   Wold-Slocinski  decomposition to general commuting pair of isometries, we need to find a characteristic property for the compression of the pair to $p_{ws}$. The next  arguments provide an approach to realize  this goal.
\begin{remark}
The compression of the  product $x_1x_2$ to $p_{ws}$ is a unilateral shift, since the unitary part of the isometry $x_1x_2$ is just $p_{uu}$ \cite{BEE1}. Despite of this illuminative description, the statue of the compression of $(x_1,x_2)$ to  $p_{ws}-p_{ss}$ is unclear yet. 

\end{remark}
\medskip
\begin{proposition}
Let $(x_1,x_2)$ be a commuting pair of isometries in $\A$. Let us consider
\[w^{\circ}_{su}=\inf (1-[x_2^{*n}x_1])~~~,~~~w^{\circ}_{us}=\inf (1-[x_1^{*n}x_2])\]
\begin{enumerate}[{(1)}]
\item The projection   $w^{\circ}_{us}$ [resp. $w^{\circ}_{su}$] is $x_1$-invariant [resp. $x_2$-invariant].
\item The compression of $x_1$ [resp. $x_2$] to $w^{\circ}_{us}$ [resp. $w^{\circ}_{su}$] is an isometry in $\A_{w^{\circ}_{us}}$ [resp. $\A_{w^{\circ}_{su}}$].
\item The compression of isometries $x_1w^{\circ}_{us}$,  $x_2w^{\circ}_{su}$, and $x_1x_2$ to $p_{ws}$ are all unilateral shifts. Indeed, $p_{ws}$ is the largest projection in which  these events simultaneously occur.
\end{enumerate}
\end{proposition}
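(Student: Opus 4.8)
The plan is to base everything on a left-annihilator reading of the two infima. For any $a\in\A$ one has $pa=0$ iff $p[a]=0$ iff $p\le 1-[a]$, so $1-[a]$ is the largest projection annihilating $a$ on the left; hence $w^{\circ}_{us}=\inf_n(1-[x_1^{*n}x_2])$ is the largest projection $p$ with $p\,x_1^{*n}x_2=0$ for all $n$, and symmetrically $w^{\circ}_{su}$ is the largest projection annihilating every $x_2^{*n}x_1$. I will also use that, for an isometry $x$ and a projection $q$, the element $xqx^*$ is again a projection, and that $q$ is $x$-invariant exactly when $xqx^*\le q$ (right-multiply $q\,xqx^*=xqx^*$ by $x$ and use $x^*x=1$ for one direction; the other is immediate).

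For (1), put $w=w^{\circ}_{us}$. Then $(x_1wx_1^*)\,x_1^{*n}x_2=x_1\,w\,x_1^{*(n+1)}x_2=0$, because $w$ annihilates $x_1^{*m}x_2$ for every $m$; thus the projection $x_1wx_1^*$ annihilates every $x_1^{*n}x_2$, so $x_1wx_1^*\le w$ by maximality, and $w$ is $x_1$-invariant. Interchanging $x_1$ and $x_2$ gives the claim for $w^{\circ}_{su}$. For (2), invariance yields $x_1 w=w x_1 w$, whence the compression $wx_1w$ satisfies $(wx_1w)^*(wx_1w)=w x_1^* x_1 w=w$; since $w$ is the unit of $\A_{w}$, this says $x_1 w$ is an isometry in $\A_{w}$, and symmetrically $x_2w^{\circ}_{su}$ is an isometry in $\A_{w^{\circ}_{su}}$.

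Part (3) is the heart of the matter. Each of $x_1x_2$, $x_1w^{\circ}_{us}$, $x_2w^{\circ}_{su}$ is an isometry---the first in $\A$, the other two in the corners $\A_{w^{\circ}_{us}}$ and $\A_{w^{\circ}_{su}}$ by (2)---so each carries its own Wold decomposition $\{p_u,p_s\}$ (Theorem \ref{T1.1}). By Remark \ref{us}, compressing such an isometry to a commuting projection below its shift part produces a unilateral shift, while any overlap with the unitary part destroys purity; so ``the compression to $p$ is a unilateral shift'' amounts to ``$p$ sits below the shift projection''. The plan is thus to compute the three shift projections and to prove that $p_{ws}$ is the largest projection dominated by all three at once. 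For $x_1x_2$ the unitary part is $p_{uu}$ (the fact cited in the remark preceding the proposition), so its shift projection is $1-p_{uu}$; since $p_{ws}=1-(p_{uu}+p_{us}+p_{su})\le 1-p_{uu}$ and $p_{ws}$ commutes with $x_1x_2$, the compression of $x_1x_2$ to $p_{ws}$ is a unilateral shift. For $x_1w^{\circ}_{us}$ one has $(x_1w^{\circ}_{us})^n=x_1^nw^{\circ}_{us}$ and $[x_1^nw^{\circ}_{us}]=x_1^nw^{\circ}_{us}x_1^{*n}\le w^{\circ}_{us}$, so its unitary part is $\inf_n x_1^nw^{\circ}_{us}x_1^{*n}$; the crux is to identify this with $\inf\{w^{\circ}_{us},\,p_{uu}+p_{us}+p_{su}\}$, equivalently to show that the shift part of $x_1w^{\circ}_{us}$ is exactly $\inf\{w^{\circ}_{us},p_{ws}\}$, and symmetrically for $x_2w^{\circ}_{su}$.

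Granting these identifications, the compression of $x_1w^{\circ}_{us}$ to $p_{ws}$---understood inside $\A_{w^{\circ}_{us}}$, i.e.\ the compression to $\inf\{w^{\circ}_{us},p_{ws}\}$, which is its entire shift part---is a unilateral shift, and likewise for $x_2w^{\circ}_{su}$; maximality then follows because any projection carrying all three compressions to shifts must lie below each shift projection, and their infimum is $p_{ws}$. I expect the genuine obstacle to be exactly the evaluation of $\inf_n x_1^nw^{\circ}_{us}x_1^{*n}$ and its matching with the $p_{\alpha\beta}$-data: this forces one to pin down how $w^{\circ}_{us}$ commutes with $p_{uu},p_{us},p_{su}$, and to phrase a single maximality statement that simultaneously respects the three distinct corner subrings $\A$, $\A_{w^{\circ}_{us}}$, $\A_{w^{\circ}_{su}}$. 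The left-annihilator description of $w^{\circ}_{us}$ together with the identity $[xq]=xqx^*$ should be the lever that turns these infima of conjugated projections into statements about $p_{uu},p_{us},p_{su}$.
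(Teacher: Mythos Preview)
Your treatment of (1) and (2) is correct and matches the paper's argument; the paper simply cites \cite[Prop.~3.8]{BEE1} for the inequality $x_1w^{\circ}_{us}x_1^*\le w^{\circ}_{us}$, which is exactly the left-annihilator computation you wrote out.

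For (3), however, you are aiming at a sharper and harder claim than is needed, and you leave the key step unproved. You propose to identify the unitary part of $x_1w^{\circ}_{us}$ as $\inf\{w^{\circ}_{us},\,p_{uu}+p_{us}+p_{su}\}$ and the shift part as $\inf\{w^{\circ}_{us},p_{ws}\}$; this would require knowing that $w^{\circ}_{us}$ commutes with $p_{ws}$ and that the unitary part fills out $w^{\circ}_{us}(1-p_{ws})$ exactly. Neither fact is established, and neither is necessary. The paper's route is much shorter: writing $w_{us}:=\inf_n[x_1^nw^{\circ}_{us}]$ for the unitary part of $x_1w^{\circ}_{us}$, one only needs the inequality $w_{us}\le p_{us}$, which is precisely what \cite[Prop.~3.8]{BEE1} provides. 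Orthogonality $p_{ws}p_{us}=0$ then gives $p_{ws}w_{us}=0$ at once, so the compression to $p_{ws}$ is a shift.

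The maximality argument in the paper is likewise more direct than the three-way infimum you sketch. The same reference gives the structural formula $p_{us}=\sum_n[x_2^n w_{us}]$; so if a projection $z$ commutes with $x_1,x_2$ and makes the compression of $x_1w^{\circ}_{us}$ a shift, then $zw_{us}=0$, whence $zp_{us}=\sum_n[x_2^n z w_{us}]=0$. Symmetrically $zp_{su}=0$, and $zp_{uu}=0$ from the $x_1x_2$ condition, forcing $z\le p_{ws}$. No identification of the shift parts with $\inf\{w^{\circ}_{us},p_{ws}\}$, and no analysis of how $w^{\circ}_{us}$ sits against the $p_{\alpha\beta}$, is required.
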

\begin{proof} (1)
Clearly $x_1w^{\circ}_{us}x^*_1$ is a projection. Based on the first part of Proposition 3.8 given in  \cite{BEE1}, we observe  that  $x_1w^{\circ}_{us}x^*_1$ is majorized by $w^{\circ}_{us}$. Equivalently, $w^{\circ}_{us}x_1w^{\circ}_{us}x^*_1=x_1w^{\circ}_{us}x^*_1$. Multiplying $x_1$ from the right hand, we get the result
\[w^{\circ}_{us}x_1w^{\circ}_{us}=x_1w^{\circ}_{us}. \]
Similarly, it is proved that $w^{\circ}_{su}$ is $x_2$-invariant. \medskip\\
(2) By a  direct calculation one may see that both $x_1w^{\circ}_{us}$ and $x_2w^{\circ}_{su}$ are isometries. \medskip\\
(3) Applying the Wold decomposition theorem \cite[Theorem 2.4]{BEE1}, we have to check that
\[
\begin{cases}
p_{ws}\inf[(x_1x_2)^n]=\inf[p_{ws}(x_1x_2)^n]=0,\\
p_{ws}\inf[(x_1w^{\circ}_{us})^n]=\inf[(p_{ws}x_1w^{\circ}_{us})^n]=0,\\
p_{ws}\inf[(x_2w^{\circ}_{su})^n]=\inf[(p_{ws}x_2w^{\circ}_{su})^n]=0.
\end{cases}
\]
As earlier mentioned $p_{uu}=\inf[(x_1x_2)^n]$. Since $p_{uu}$ and $p_{ws}$ are orthogonal, the first one is got.
To obtain the next ones, we apply the following formula
\[(x_1w^{\circ}_{us})^n=x_1^nw^{\circ}_{us}.\]
which obtained by a direct calculation, applying the fact that $w^{\circ}_{us}$ is $x_1$ invariant. Hence, we may write
\[p_{ws}\inf[(x_1w^{\circ}_{us})^n]=p_{ws}\inf[x_1^nw^{\circ}_{us}].\]
Note that the projection $\inf[x_1^nw^{\circ}_{us}]$ is majorized by $p_{us}$. Thus,
\[p_{ws}\inf[x_1^nw^{\circ}_{us}] \leq  p_{ws}p_{us}=0.\]
Similarly one may see that  $p_{ws}\inf[x_2^nw^{\circ}_{su}]=0$. To complete the proof, let us suppose that $z$ is a projection commuting with both $x_1$ and $x_2$ such that the compression of isometries $x_1x_2$, $x_1w^{\circ}_{us}$ and, $x_2w^{\circ}_{su}$ to $z$ are all unilateral shifts.  Thus,
\[
\begin{cases}
z\inf[(x_1x_2)^n]=0,\\
z\inf[x_1^n w^{\circ}_{us}]=0,\\
z\inf[x_2^n w^{\circ}_{su}]=0.
\end{cases}
\]
We  show $z\leq  p_{ws}$. The first identity is equivalent to say $zp_{uu}=0$. It is  mentioned in \cite[Proposition 3.8]{BEE1} that
\[p_{us}=\sum[x_2^nw_{us}]\]  where $w_{us}=\inf[x_1^n w^{\circ}_{us}]$. It leads us to get
\[zp_{us}=\sum[x_2^nzw^{\circ}_{us}]=\sum[x_2^nz(\inf[x_1^n w^{\circ}_{us}])]=0.\]
Similarly  the identity $zp_{su}=0$ is proved. To sum up,
\[zp_{uu}=zp_{us}=zp_{su}=0.\]
Since $\mathcal{Q}$ forms a basis, then $z$ should be majorized by $p_{ws}$.
\end{proof}

\begin{definition}
Let $(x_1,x_2)$ be a commuting pair of isometries. It is  called a weak bi-shift if $x_2w^{\circ}_{su},x_1w^{\circ}_{us}$ and, $x_1x_2$ are all unilateral shifts.
\end{definition}

Now we conclude that;

\begin{theorem}\label{T2.1}
Let $(x_1,x_2)$ be a commuting pair of isometries in $\A$. Then $\{p_{uu},p_{us},p_{su},p_{ws}\}$ forms a basis in $\A$ making both $x_1$ and $x_2$ diagonalizable such that the diagonal blocks are  unitary, unilateral shift or weak bi-shift.
\[x_i=x_ip_{uu}\oplus x_ip_{us}\oplus x_ip_{su}\oplus x_ip_{ws} \hspace{1.5cm} (i=1,2).\]
\end{theorem}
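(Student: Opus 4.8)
The statement bundles three assertions: that $\{p_{uu},p_{us},p_{su},p_{ws}\}$ is a basis, that each $x_i$ commutes with every member, and that the four diagonal blocks have the advertised types. The plan is to read off the block types for $p_{uu},p_{us},p_{su}$ directly from their defining maximality, to reduce the basis claim to the pairwise orthogonality of those three projections, and to extract the weak bi-shift character of the $p_{ws}$-block from the preceding Proposition.

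First I would isolate the maximality lemma that anchors everything: a projection $q$ commuting with the isometry $x_1$ satisfies $q\leq p^{(1)}_u$ precisely when $x_1q$ is unitary in $\A_q$, and $q\leq p^{(1)}_s$ precisely when $x_1q$ is a unilateral shift in $\A_q$. The substantive directions rest on the key identity $[x_1q]=x_1qx_1^{*}$. If $x_1q$ is unitary then $x_1qx_1^{*}=q$, whence inductively $[x_1^{n}q]=x_1^{n}qx_1^{*n}=q\leq[x_1^{n}]$ for all $n$, so $q\leq\inf_n[x_1^{n}]=p^{(1)}_u$; if instead $x_1q$ is a pure shift, then using $[x_1^{n}q]=[x_1^{n}]q$ (valid since $q$ commutes with $x_1^{n}$, hence with $[x_1^{n}]$) the unitary part of $x_1q$ computed in the corner is $\inf_n([x_1^{n}]q)=p^{(1)}_uq$, which must vanish, giving $q\leq 1-p^{(1)}_u=p^{(1)}_s$. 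Applying this to $x_1$ and to $x_2$ together with the defining properties of the four projections yields $p_{uu},p_{us}\leq p^{(1)}_u$, $\ p_{su}\leq p^{(1)}_s$, $\ p_{uu},p_{su}\leq p^{(2)}_u$, and $p_{us}\leq p^{(2)}_s$.

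Since each Wold-basis is a basis we have $p^{(1)}_up^{(1)}_s=0$ and $p^{(2)}_up^{(2)}_s=0$, so the elementary observation that $a\leq e$, $b\leq f$, $ef=0$ force $ab=(ae)(fb)=a(ef)b=0$ makes $p_{uu},p_{us},p_{su}$ pairwise orthogonal: $p_{uu}\perp p_{us}$ through the Wold-basis of $x_2$, and $p_{uu}\perp p_{su}$, $p_{us}\perp p_{su}$ through that of $x_1$. Hence $p_{uu}+p_{us}+p_{su}$ is a projection whose orthocomplement is exactly $p_{ws}$, and the four projections sum to $1$. Commutation is then free of charge: each of $p_{uu},p_{us},p_{su}$ commutes with $x_i$ by definition, so their sum and its complement $p_{ws}$ do too. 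The block types of the first three summands are their defining properties, so only the $p_{ws}$-block remains.

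For that block I would pass to the corner $\A_{p_{ws}}$ and set $y_i=x_ip_{ws}$. The one genuinely technical point, and the step I expect to be the main obstacle, is to confirm that the intrinsic weak bi-shift data of $(y_1,y_2)$ computed inside $\A_{p_{ws}}$ is merely the restriction of the global data. Using that $p_{ws}$ commutes with $x_1,x_2,x_1^{*},x_2^{*}$, together with the identities $[ap]=[a]p$ and $\inf_n(e_np)=(\inf_n e_n)p$ for projections $e_n$ commuting with $p$ (such infima being computed inside the corner), I would verify that the corner quantities equal $p_{ws}w^{\circ}_{us}$ and $p_{ws}w^{\circ}_{su}$. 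Then
\[ y_1\bigl(p_{ws}w^{\circ}_{us}\bigr)=p_{ws}x_1w^{\circ}_{us},\quad y_2\bigl(p_{ws}w^{\circ}_{su}\bigr)=p_{ws}x_2w^{\circ}_{su},\quad y_1y_2=p_{ws}x_1x_2, \]
which are precisely the three compressions shown to be unilateral shifts in part (3) of the preceding Proposition. By the definition of a weak bi-shift this exhibits $(y_1,y_2)$ as a weak bi-shift and completes the diagonalization $x_i=x_ip_{uu}\oplus x_ip_{us}\oplus x_ip_{su}\oplus x_ip_{ws}$. The crux is thus entirely the corner-compatibility of the left-projection and infimum operations; the orthogonality and the weak bi-shift identification then follow mechanically.
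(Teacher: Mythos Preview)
Your proposal is correct and follows the same route the paper implicitly takes: the paper states the theorem as an immediate consequence of the preceding Proposition (``Now we conclude that;'') and gives no separate proof, so your write-up is in fact more detailed than the paper's. In particular, the paper never explicitly verifies the pairwise orthogonality of $p_{uu},p_{us},p_{su}$ (needed even to define $p_{ws}$), whereas you derive it cleanly from $p_{\alpha\beta}\leq p^{(1)}_\alpha$ and $p_{\alpha\beta}\leq p^{(2)}_\beta$ via the Wold bases; and the paper tacitly identifies the corner quantities $(w^{\circ}_{us})_{\A_{p_{ws}}},(w^{\circ}_{su})_{\A_{p_{ws}}}$ with $p_{ws}w^{\circ}_{us},\,p_{ws}w^{\circ}_{su}$, which you rightly flag as the one technical point and justify through $[ap]=[a]p$ and $\inf_n(e_np)=(\inf_ne_n)p$ when $p$ commutes with the relevant elements (the commutation of $p$ with $[a]$ being the content of \cite[Prop.~4.5]{Berb2}, already invoked in the proof of Theorem~\ref{T1}). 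There is no genuine divergence in strategy.
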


\section{Decomposition of a pair of power partial isometries.}
This section is allocated  to   the extension of   Halmos-Wallen decomposition to a commuting pair of power partial isometries. To deal with this, let us review Halmos-Wallen theorem  concerning  a  power partial isometry $x\in \A$.


\begin{theorem}\label{hw}{\bf(Halmos-Wallen decomposition in Baer $*$-rings \cite{BEE2})}
Let $x$ be  power partial isometry in $\A$. We put,
\[
\begin{cases}
p_u=\inf_{n\geq0}\big\{[x^n],[x^{*n}]\big\},\\
p_s=(1-p_u)\inf_{n\geq 0}[x^{*n}],\\
p_b=(1-p_u)\inf_{n\geq 0}[x^n],\\
p_t=1-(p_u+p_s+p_b).
\end{cases}
\]
Then $Q_{HW}=\{p_u, p_s, p_b,p_t\}$ diagonalizes  $x$  whose diagonal blocks
$xp_u, xp_s, xp_b, xp_{t}$ are unitary, unilateral shift, backward shift and a direct sum of truncated shifts respectively.
\[x=xp_u\oplus xp_s\oplus xp_b\oplus xp_{t}.\]
\end{theorem}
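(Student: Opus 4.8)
The plan is to reduce the four-block decomposition to the already-established Wold decomposition (Theorem \ref{T1.1}) and its adjoint, after first organizing the lattice of initial and final projections of the powers of $x$. Write $E_n=[x^n]=x^nx^{*n}$ and $F_n=[x^{*n}]=x^{*n}x^n$ for the final and initial projections of $x^n$; these are genuine projections precisely because $x$ is a \emph{power} partial isometry. From $[x^n]x^n=x^n$ together with $x^{n+1}=x^nx$ and $x^{*(n+1)}=x^{*n}x^*$ one reads off immediately that both chains are decreasing, $E_{n+1}\leq E_n$ and $F_{n+1}\leq F_n$. Hence $r:=\inf_n E_n$ and $s:=\inf_n F_n$ exist, and $p_u=\inf_n\{E_n,F_n\}=r\wedge s$.

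The foundational step is a commutation lemma: the whole family $\{E_n,F_m\}$ is commutative; in particular $r$ and $s$ commute with $rs=p_u$. The base case $E_1F_1=F_1E_1$ is clean and purely algebraic. Since $x^2$ is a partial isometry, $x^{*2}x^2=x^*F_1x$ is a projection; writing $a=F_1x$ (so $a^*a=x^*F_1x$), idempotency gives $a^*(aa^*)a=a^*a$, and since $aa^*=F_1E_1F_1$ while $F_1a=a$, this collapses to $a^*(1-E_1)a=0$. Properness of the involution then forces $(1-E_1)F_1x=0$, whence $(1-E_1)F_1[x]=(1-E_1)F_1E_1=0$ via the rule $ab=0\Rightarrow a[b]=0$; thus $F_1E_1=E_1F_1E_1$ is self-adjoint and equals $E_1F_1$. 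The higher commutations $E_nF_m=F_mE_n$ follow by the same mechanism applied to the partial isometries $x^{n+m}$ together with an induction, and this bookkeeping is the first place where genuine care is required.

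Granting commutation, the basis claim is routine: from $p_u\leq s$ and $p_u\leq r$ one gets $p_s=(1-p_u)s=s-p_u$ and $p_b=(1-p_u)r=r-p_u$, so $p_sp_b=(s-p_u)(r-p_u)=sr-p_u=0$ because $sr=r\wedge s=p_u$. Thus $p_u,p_s,p_b$ are pairwise orthogonal with $p_u+p_s+p_b=r\vee s$, and $\{p_u,p_s,p_b,p_t\}$ is a basis with $p_t=1-(r\vee s)$. The commutation lemma is exactly what makes $p_s,p_b$ (hence $p_u,p_t$) invariant under both $x$ and $x^*$, so these really are the diagonal blocks. On $\A_{p_u}$ all $E_n$ and $F_n$ restrict to the identity, so $x^*x=xx^*=p_u$ there and $xp_u$ is both isometry and co-isometry, i.e.\ unitary. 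Since $p_s\leq F_1=x^*x$, the compression $xp_s$ satisfies $(xp_s)^*(xp_s)=p_s$, so it is an isometry, and $rp_s=r(s-p_u)=p_u-p_u=0$ kills its unitary part; Theorem \ref{T1.1} then identifies $xp_s$ as a unilateral shift. Applying the identical argument to $x^*$ on $p_b$ shows $(xp_b)^*$ is a unilateral shift, so $xp_b$ is a backward shift.

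The remaining and hardest block is $xp_t$. By construction $rp_t=sp_t=0$, so on $\A_{p_t}$ the power partial isometry $y:=xp_t$ has $\inf_n[y^n]=\inf_n[y^{*n}]=0$: it carries no unitary, pure-isometry, or pure-co-isometry part. The claim is that such a $y$ is a direct sum of truncated shifts, and I would prove this by splitting $p_t$ further according to nilpotency length: using the differences of the chains $\{E_n\}$ and $\{F_n\}$ on this corner one builds a wandering projection and, for each $k\geq1$, the projection on which $y^k=0$ but $y^{k-1}\neq0$, then checks that these are pairwise orthogonal, reduce $y$, exhaust $p_t$, and carry $y$ to a truncated shift of index $k$. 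I expect this finer decomposition to be the main obstacle: unlike the other three blocks it has no direct Wold analogue and must be assembled by hand inside the projection lattice, with the verification that the length-$k$ pieces actually sum to $p_t$ (equivalently that $y$ is, in the appropriate algebraic sense, locally nilpotent once both infima vanish) being the delicate point.
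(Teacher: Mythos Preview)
The paper does not supply a proof of this theorem: it is quoted verbatim from \cite{BEE2} as background (note the attribution ``Halmos--Wallen decomposition in Baer $*$-rings \cite{BEE2}'' and the absence of any proof environment after the statement). There is therefore nothing in the present paper to compare your argument against.

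That said, your outline is the standard route and matches in spirit what one finds in \cite{BEE2}: establish pairwise commutativity of the family $\{[x^n],[x^{*m}]\}$, use it to show that the four projections $p_u,p_s,p_b,p_t$ are mutually orthogonal and commute with $x$, then feed $xp_s$ and $(xp_b)^*$ into the Wold decomposition to identify them as shifts, and finally handle the residual block $p_t$ by a separate nilpotency-length splitting. Your base commutation argument ($E_1F_1=F_1E_1$ via properness of the involution and the fact that $x^{*2}x^2$ is idempotent) is clean and correct. Two places deserve more than a one-line gesture if you want a self-contained proof: the general commutation $E_nF_m=F_mE_n$ needs an explicit induction (the ``same mechanism applied to $x^{n+m}$'' is the right idea but the bookkeeping is not entirely trivial), and the truncated-shift block is, as you yourself flag, the genuinely delicate part---the assertion that $\inf_n[y^n]=\inf_n[y^{*n}]=0$ forces $y$ to be an orthogonal sum of finite truncated shifts requires building the length-$k$ reducing projections and proving they exhaust $p_t$, and this is where most of the work in \cite{BEE2} lies.
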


\begin{definition}
Let $(x_1,x_2)$ be a commuting pair of power partial isometries. We say  $(x_1,x_2)$ enjoy Halmos-Wallen decomposition if there exists a basis $Q$ diagonalizes both $x_1$ and $x_2$  such that the compression of $x_i$'s to the projections contained in $Q$ are either unitary, unilateral shift, backward shift or truncated shift.
\end{definition}

Markedly, we should not  expect that every commuting pair of power partial isometries enjoy  Halmos-Wallen decomposition in general. Of course in some particular situations,
like doubly commuting case,  the decomposition will move forward  well. Indeed, let $Q^{j}_{HW}$ be the  Halmos-Wallen basis corresponding to $x_j$ ($j=1,2$) obtained in Theorem \ref{hw}. We have then,


\begin{proposition}\label{c}
Let $(x_1,x_2)$ be a doubly commuting pair of power partial isometries in $\A$. Then
\[Q^{(x_1,x_2)}_{HW}=\{pq: p\in Q^{1}_{HW}  ~,~q\in Q^{2}_{HW}\}\]
is a basis making both $x_1$ and $x_2$ diagonalize. The compression of $x_j$ to projections appeared in $Q^{(x_1,x_2)}_{HW}$ is  unitary, unilateral shift, backward-shift, or truncated shift.
\end{proposition}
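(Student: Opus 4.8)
The plan is to reduce the statement to two independent ingredients: a \emph{commutation} step that converts double commutativity into commutation of all the projections involved, and a \emph{compression} step showing that each of the four Halmos--Wallen types survives compression by a commuting projection. I would set up notation by writing $Q^{i}_{HW}=\{p^{(i)}_u,p^{(i)}_s,p^{(i)}_b,p^{(i)}_t\}$ for the basis produced by Theorem \ref{hw} applied to $x_i$.

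First I would record that double commutativity means $x_1$ commutes with both $x_2$ and $x_2^*$, hence with every $x_2^m$ and every $x_2^{*m}$, and symmetrically $x_2$ commutes with every $x_1^n,x_1^{*n}$. The key Baer $*$-ring principle---the same translation invoked through \cite[Prop.~4.5]{Berb2} in the proof of Theorem \ref{T1}---is that an element $*$-commuting with $a$ commutes with the left projection $[a]$. Applying it repeatedly yields that $x_1$ commutes with every $[x_2^m]$ and $[x_2^{*m}]$, and that each $[x_1^n]$ commutes with each $[x_2^m]$ and $[x_2^{*m}]$. Since commutation with a fixed element is preserved under infima, suprema, and complements of projections, $x_1$ commutes with every member of $Q^{2}_{HW}$, $x_2$ with every member of $Q^{1}_{HW}$, and every $p\in Q^{1}_{HW}$ commutes with every $q\in Q^{2}_{HW}$. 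Consequently each product $pq$ is a projection, the family is pairwise orthogonal (by orthogonality inside each basis), and
\[\bigoplus_{p\in Q^{1}_{HW},\, q\in Q^{2}_{HW}} pq=\Big(\sum_{p}p\Big)\Big(\sum_{q}q\Big)=1,\]
so $Q^{(x_1,x_2)}_{HW}$ is a basis; moreover each $pq$ commutes with both $x_1$ and $x_2$, which is precisely simultaneous block-diagonalizability.

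The second ingredient is a compression lemma: if $y$ is a power partial isometry and $r$ a projection commuting with $y$ (hence with $y^*$), then $yr$ is a power partial isometry in $\A_r$ whose Halmos--Wallen pieces are obtained by multiplying those of $y$ by $r$. The engine is the pair of identities $[y^n r]=[y^n]\,r$ and $[y^{*n} r]=[y^{*n}]\,r$, valid because $r$ reduces $y$, together with the fact that multiplication by a commuting projection commutes with taking infima (for commuting projections $p$ one has $(\inf_j p_j)r=\inf_j(p_j r)$). These give $\inf_n[(yr)^n]=(\inf_n[y^n])\,r$ and its adjoint analogue, which immediately show that compression preserves each type: a unitary stays unitary, a unilateral shift stays a pure isometry, a backward shift stays a pure co-isometry, and a direct sum of truncated shifts keeps both limit projections zero and so remains a direct sum of truncated shifts.

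Applying the lemma with $r=pq$ finishes the classification. Taking $p\in Q^{1}_{HW}$ of a prescribed type, the Halmos--Wallen pieces of $x_1(pq)$ are the projections $p^{(1)}_\ast\,pq$ with $\ast\in\{u,s,b,t\}$; since $pq\le p$ and distinct pieces of $x_1$ are orthogonal, only the piece matching the type of $p$ survives, and it equals $pq$, so $x_1(pq)$ is purely of that type. Symmetrically $x_2(pq)$ is of the type labelled by $q$. The step I expect to be the main obstacle is the commutation step: with no topology available one must propagate $*$-commutativity from the generators $x_1,x_2$ all the way up through the left projections $[x_i^n],[x_i^{*n}]$ and their lattice combinations purely algebraically, and one must verify the compression identities $[y^n r]=[y^n]r$ together with the interchange of infimum with multiplication by $r$ directly from the universal properties of left projections and infima in a Baer $*$-ring.
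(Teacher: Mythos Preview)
The paper states this proposition without proof, treating it as a routine consequence of the doubly commuting hypothesis together with Theorem~\ref{hw}. Your two-step outline (commutation step, then compression step) is exactly the argument the paper is implicitly invoking, and it is correct: the passage from $*$-commutation with $x_i$ to commutation with $[x_i^n],[x_i^{*n}]$ and then with their lattice combinations is the standard Baer $*$-ring mechanism (cf.\ \cite[Prop.~4.5]{Berb2}), and the compression identities $[y^nr]=[y^n]\,r$, $(\inf_j p_j)r=\inf_j(p_jr)$ for a $*$-commuting projection $r$ are precisely what is needed to see that each Halmos--Wallen type is inherited by $x_j(pq)$. There is nothing to compare your approach against; you have simply supplied the details the paper omits.
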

For a commuting  pair of power partial isometries $(T_1,T_2)$ in $\BH$ that $T_1T_2$ remains a power partial isometry, Burduck proposed a decomposition \cite{Burdak1}. Axioms of Baer $*$-rings may still  afford it.

\begin{lemma}\label{A1}
Let $x$ be a power partial isometry. For every $n\in \mathbb{N}$
\[[x^*]x^{n-1}[x^{*n}]=x^{n-1}[x^{*n}].\]
\end{lemma}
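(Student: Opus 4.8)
The plan is to recast both left projections in terms of initial and final projections of powers of $x$, reduce the claim to a single intertwining identity, and then extract that identity from the idempotency of one projection together with properness of the involution.

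First I would observe that the left projection of a partial isometry $w$ is its final projection $ww^*$. Since $x$ and $x^n$ are partial isometries, this gives $[x^*]=x^*(x^*)^*=x^*x$ and $[x^{*n}]=x^{*n}(x^{*n})^*=x^{*n}x^n$. Writing $f_k:=x^{*k}x^k$ (a genuine projection, being the initial projection of the partial isometry $x^k$) and $g:=xx^*$, the defining relation $x^nx^{*n}x^n=x^n$ collapses the left-hand side:
\[ [x^*]\,x^{n-1}\,[x^{*n}]=f_1x^{n-1}f_n=x^*\,(x^nx^{*n}x^n)=x^*x^n. \]
Since $x^*x^n=f_1x^{n-1}$ and the right-hand side of the lemma is $x^{n-1}f_n$, the statement is equivalent to the intertwining relation $f_1x^{n-1}=x^{n-1}f_n$.

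Next I would isolate the one-step identity $f_kx=xf_{k+1}$ for every $k\ge 0$ (with $f_0=1$); chaining it $n-1$ times telescopes precisely to $f_1x^{n-1}=x^{n-1}f_n$. To prove the one-step identity, note that $f_{k+1}=x^*f_kx$, and crucially that $f_{k+1}$ is idempotent because $x^{k+1}$ is a partial isometry. Expanding $f_{k+1}^2=f_{k+1}$ and replacing the inner factor $xx^*$ by $g$ yields $x^*f_k(1-g)f_kx=0$. The left side is $b^*b$ with $b=(1-g)f_kx$, so properness of the involution forces $b=0$, that is $(1-g)f_kx=0$. Then $xf_{k+1}=(xx^*)f_kx=gf_kx=f_kx$, which is exactly the desired identity.

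The only non-formal move, and the step I expect to require the most care, is this last passage from the quadratic relation (idempotency of $f_{k+1}$) to the linear vanishing $(1-g)f_kx=0$: this is precisely where the power-partial-isometry hypothesis is used, since it is what makes $f_{k+1}$ a projection, and where properness of the involution replaces the Hilbert-space and topological reasoning of the classical argument. Beyond that, only routine bookkeeping remains, namely tracking which projection is initial and which is final, and checking the degenerate case $k=0$, where $f_0=1$ and the identity reduces to $x=xf_1=xx^*x$.
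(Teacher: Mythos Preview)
Your argument is correct. The reduction to the intertwining identity $f_1x^{n-1}=x^{n-1}f_n$ is valid, and your derivation of the one-step relation $f_kx=xf_{k+1}$ from idempotency of $f_{k+1}=x^*f_kx$ together with properness of the involution is clean and complete.

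The paper proceeds differently. It invokes the commutation $[x^*][x^{n-1}]=[x^{n-1}][x^*]$ (a standard consequence of the power partial isometry hypothesis, established in the authors' earlier work) and then computes directly:
\[
[x^*]x^{n-1}x^{*n}=[x^*]\,x^{n-1}x^{*(n-1)}\,x^*=[x^*][x^{n-1}]x^*=[x^{n-1}][x^*]x^*=x^{n-1}x^{*n},
\]
from which the stated identity follows after right-multiplying by $x^n$. So the paper trades your inductive intertwining chain for a single appeal to the commuting family of range projections. Your route is longer but fully self-contained, extracting exactly the needed relation from idempotency and properness without importing the global commutativity lemma; the paper's route is a one-line calculation once that lemma is in hand.
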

\begin{proof}
For every $n\in \mathbb{N}$, note that   $[x^*][x^{n-1}]=[x^{n-1}][x^*]$. Thus,
\[[x^*]x^{n-1}x^{*n}=[x^*]x^{n-1}x^{*n-1}x^*=[x^*][x^{n-1}]x^*=[x^{n-1}][x^*]x^*=x^{n-1}x^{*n}.\]
 \end{proof}

\begin{lemma}\label{A2}
Let $(x_1,x_2)$ be a commuting pair of power partial isometries. Suppose that $x_1x_2$ is a power partial isometry. For every $n\in \mathbb{N}$
\begin{enumerate}
\item $[x_1^*]x_1^{n-1}[x_2^n]x_1^{*n}=x_1^{n-1}[x_2^n]x_1^{*n}.$
\item $[x_1^*[x_1^nx_2^n]]\leq[x_1^{n-1}x_2^{n-1}].$
\end{enumerate}

\end{lemma}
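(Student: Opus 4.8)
The plan is to prove both parts by transporting everything to the single power partial isometry $y:=x_1x_2$ and applying Lemma~\ref{A1} to $y$. The pivot is the element $a_n:=x_1^{n-1}[x_2^n]x_1^{*n}$, the right-hand side of (1). First I would record, using only $x_1x_2=x_2x_1$ together with $[x_2^n]=x_2^nx_2^{*n}$ and $y^{*n}=x_2^{*n}x_1^{*n}$, the normal form $a_n=x_2\,y^{n-1}y^{*n}=x_2[y^{n-1}]y^{*}$, as well as the auxiliary identities $x_1a_n=x_1^n[x_2^n]x_1^{*n}=y^ny^{*n}=[y^n]$ and $x_1^*[x_1^nx_2^n]=x_1^*[y^n]=[x_1^*]a_n$. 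The last identity shows that (1) is literally the assertion $[x_1^*]a_n=a_n$, i.e. $(1-[x_1^*])a_n=0$ with $[x_1^*]=x_1^*x_1$, and that the element whose left projection is estimated in (2) is exactly $[x_1^*]a_n$.

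The decisive step is to evaluate $a_n^*a_n$ and show it equals the projection $[y^n]$. I would use Lemma~\ref{A1} for $y$ in the form $[y^*]y^{n-1}y^{*n}=y^{n-1}y^{*n}$ to strip a $y^*$ off the left: since $y^*y^n=[y^*]y^{n-1}$, the lemma gives $[y^{n-1}]y^{*}=y^{n-1}y^{*n}=y^{*}\,y^ny^{*n}=y^{*}[y^n]$, hence $a_n=x_2y^{*}[y^n]$. Consequently $a_n^*a_n=[y^n]\,y\,[x_2^*]\,y^{*}\,[y^n]$, where $[x_2^*]=x_2^*x_2$, and the inner block collapses in one line: $y[x_2^*]y^{*}=(yx_2^*)(x_2y^{*})=(x_1[x_2])([x_2]x_1^*)=x_1[x_2]x_1^*=yy^*=[y]$, using idempotency of $[x_2]$ and of $yy^*$ (the latter because $y$ is a partial isometry). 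As $y^n=y\cdot y^{n-1}$ forces $[y^n]\le[y]$, this yields $a_n^*a_n=[y^n][y][y^n]=[y^n]$.

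Both conclusions now drop out. For (1), the proper involution gives $\big((1-[x_1^*])a_n\big)^*\big((1-[x_1^*])a_n\big)=a_n^*a_n-(x_1a_n)^*(x_1a_n)=[y^n]-[y^n]=0$, whence $(1-[x_1^*])a_n=0$. For (2), I combine $x_1^*[x_1^nx_2^n]=[x_1^*]a_n=a_n$ (the last equality being (1)) with a range estimate on left projections: from $a_n=x_2[y^{n-1}]y^*$ and $x_2y^{n-1}=y^{n-1}x_2$ one gets $[a_n]\le[x_2[y^{n-1}]]=[y^{n-1}x_2y^{*(n-1)}]\le[y^{n-1}]=[x_1^{n-1}x_2^{n-1}]$, which is exactly the asserted inequality.

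The main obstacle is conceptual rather than computational: the obvious idea of mimicking the proof of Lemma~\ref{A1} directly on $x_1$ fails, because $x_1$ need not commute with $[x_2^n]$ — this already breaks when $x_1=x_2$ is a nilpotent Jordan block — so the hypothesis that $x_1x_2$ is a power partial isometry has to enter in an essential way. The resolution is to spot the normal form $a_n=x_2[y^{n-1}]y^*$ and to apply Lemma~\ref{A1} to the product $y$ rather than to the individual factors; after that, everything rests on the short idempotency identity $y[x_2^*]y^*=[y]$ and two uses of $cc^*=0\Rightarrow c=0$.
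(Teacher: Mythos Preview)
Your argument is correct but follows a genuinely different route from the paper's. The paper isolates, as the key consequence of the hypothesis that $x_1x_2$ is a power partial isometry, the commutation $[x_1^{*n}][x_2^n]=[x_2^n][x_1^{*n}]$; this lets one insert $[x_1^{*n}]$ and slide it past $[x_2^n]$, after which Lemma~\ref{A1} is applied to $x_1$ itself to kill $(1-[x_1^*])x_1^{n-1}[x_1^{*n}]$. Part (2) is then obtained by verifying the two auxiliary identities $[x_1^{n-1}x_2^{n-1}]x_1^*[x_1^nx_2^n]=a_n$ and $x_1^*[x_1^nx_2^n]=[x_1^*]a_n$ and combining them with (1). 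You instead funnel everything through $y=x_1x_2$: the crux is the single computation $a_n^*a_n=[y^n]=(x_1a_n)^*(x_1a_n)$, from which (1) drops out by properness of the involution, and (2) follows from the normal form $a_n=x_2[y^{n-1}]y^*$ via the elementary range estimate $[ab]\le[a]$. Your approach is arguably tidier in that it never needs to isolate the commutation of the projections $[x_1^{*n}]$ and $[x_2^n]$, and Lemma~\ref{A1} is invoked only once, for the product $y$; the paper's approach, on the other hand, keeps the individual factor $x_1$ in the foreground and stays closer in shape to the single-operator Lemma~\ref{A1}.
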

\begin{proof} (1) The assertion is equivalent to say
\begin{equation}\label{b1}
(1-[x_1^*])x_1^{n-1}[x_2^n]x_1^{*n}=0
\end{equation}
for every $n\in \mathbb{N}$.  Since  $x_1x_2$ is a power partial isometry, we have $[x_1^{*n}][x_2^n]=[x_2^n][x_1^{*n}]$ for every $n\in \mathbb{N}$. Using this point, Lemma\ref{A1} leads us to write
\begin{align*}
(1-[x_1^*])x_1^{n-1}[x_2^n]x_1^{*n}&=(1-[x_1^*])x_1^{n-1}[x_2^n]\underbrace{x_1^{*n}x_1^n}_{[x_1^{*n}]}x_1^{*n}
                                 \\&=(1-[x_1^*])x_1^{n-1}[x_1^{*n}][x_2^n]x_1^{*n}.
                                 \\&=(1-[x_1^*])[x_1^*]x_1^{n-1}[x_1^{*n}][x_2^n]x_1^{*n}=0.
\end{align*}
(2) First, we check the following identities.
\[
\begin{cases}
[x_1^{n-1}x_2^{n-1}]x_1^*[x_1^nx_2^n]=x_1^{n-1}[x_2^n]x_1^{*n},\\
x_1^*[x_1^nx_2^n]=[x_1^*]x_1^{n-1}[x_2^n]x_1^{*n}.
\end{cases}
\]
To prove the first identity, we apply again  $[x_1^{*n}][x_2^n]=[x_2^n][x_1^{*n}]$.
\begin{align*}
[x_1^{n-1}x_2^{n-1}]x_1^*[x_1^nx_2^n]&=x_1^{n-1}x_2^{n-1}x_2^{*n-1}x_1^{*n-1}x_1^*x_1^{n}x_2^{n}x_2^{*n}x_1^{*n}
                                   \\&=x_1^{n-1}[x_2^{n-1}][x_1^{*n}][x_2^{n}]x_1^{*n}
                                   \\&=x_1^{n-1}[x_2^{n-1}][x_2^{n}][x_1^{*n}]x_1^{*n}
                                   \\&=x_1^{n-1}[x_2^{n}]x_1^{*n}.
\end{align*}
As for the second one,
\begin{align*}
[x_1^*]x_1^{n-1}[x_2^n]x_1^{*n}&=x_1^*x_1x_1^{n-1}x_2^nx_2^{*n}x_1^{*n}=x_1^*x_1^nx_2^nx_2^{*n}x_1^{*n}
                             \\&=x_1^*[x_1^nx_2^n].
\end{align*}
Therefore, using Lemma \ref{A1}, we may write
\[[x_1^{n-1}x_2^{n-1}]x_1^*[x_1^nx_2^n]=x_1^*[x_1^nx_2^n],\]
which is equivalent to the assertion.
\end{proof}

\begin{theorem}\label{ppi}
Let $(x_1,x_2)$ be a commuting pair of power partial isometries in $\A$. Suppose that $x_1x_2$ is also a power partial isometry.  Let us put,
\[
\begin{cases}
p_u=\inf_{n\geq 0}\Big\{[x_1^nx_2^n], [x_1^{*n}x_2^{*n}]\Big\},\\
p_{is}=(1-p_u)\inf_{n\geq0}\big\{[x_1^{*n}x_2^{*n}]\big\},\\
p_{cis}=(1-p_u)\inf_{n\geq n}\big\{[x_1^{n}x_2^n]\big\},\\
p_{t}=1-\sup\{p_{is},p_{cis}\}.
\end{cases}
\]
Then $Q=\{p_u, p_{is}, p_{cis}, p_{t}\}$ is a basis in $\A$. The compression of $x_j$'s to  $p_u, p_{is},$ and  $p_{cis}$ are unitary,  isometry and,  co-isometry respectively. The compression of $x_1x_2$ to $p_t$ is a direct sum of truncated shifts.
\end{theorem}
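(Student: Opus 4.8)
The plan is to reduce the whole statement to the single--operator Halmos--Wallen theorem (Theorem~\ref{hw}) applied to the product $y:=x_1x_2$. Because $x_1$ and $x_2$ commute, so do $x_1^*$ and $x_2^*$, whence $y^n=x_1^nx_2^n$ and $y^{*n}=x_1^{*n}x_2^{*n}$ for every $n$. By hypothesis $y$ is a power partial isometry, so the four projections $p_u,p_{is},p_{cis},p_t$ are exactly the Halmos--Wallen projections of $y$ produced by Theorem~\ref{hw}, with $p_{is}$ playing the role of the shift part $p_s$ and $p_{cis}$ that of the backward--shift part $p_b$. This identification immediately gives that $Q$ is a basis (the first claim) and that the compression of $x_1x_2=y$ to $p_t$ is a direct sum of truncated shifts (the last claim), together with the information that the compressions of $y$ to $p_u$, $p_{is}$, $p_{cis}$ are unitary, a unilateral shift, and a backward shift respectively.

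What remains, the second claim, is to pass from these statements about the product $y$ to the corresponding statements about the \emph{individual} factors $x_1,x_2$. I would first isolate the following purely algebraic lemma: if a projection $z$ commutes with both $x_1$ and $x_2$ and the compression $yz=(x_1z)(x_2z)$ is an isometry [resp.\ a co-isometry, a unitary] in the corner $\A_z$, then each $x_iz$ is an isometry [resp.\ a co-isometry, a unitary] in $\A_z$. For the isometric case, $z=(yz)^*(yz)=zx_2^*x_1^*x_1x_2z$ together with $x_1^*x_1\le 1$ and $x_2^*x_2\le 1$ forces $z\le zx_2^*x_2z\le z$, hence $zx_2^*x_2z=z$ and then $zx_2^*(1-x_1^*x_1)x_2z=0$; since $1-x_1^*x_1$ is a projection, properness of the involution yields $(1-x_1^*x_1)x_2z=0$, so $x_2z$ is isometric, and the symmetric factorization $y=x_2x_1$ makes $x_1z$ isometric. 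The co-isometric case is the adjoint statement and the unitary case is the conjunction of the two.

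The genuine obstacle is verifying the hypothesis of that lemma, namely that $p_u,p_{is},p_{cis},p_t$ commute with $x_1$ and $x_2$ \emph{separately}: Theorem~\ref{hw} only guarantees that they commute with $y$, and a priori the range projections $[y^n]=[x_1^nx_2^n]$ need not interact with $x_1$ or $x_2$ alone. This is precisely the purpose of Lemmas~\ref{A1} and~\ref{A2}. Lemma~\ref{A2}(2), which reads $[x_1^*[x_1^nx_2^n]]\le[x_1^{n-1}x_2^{n-1}]$, says that $x_1^*$ carries the range of $y^n$ into the range of $y^{n-1}$, and the companion identities of Lemmas~\ref{A1}--\ref{A2} (with their adjoint and $x_2$ analogues) describe how $x_1,x_2,x_1^*,x_2^*$ move along the decreasing chains $\{[y^n]\}$ and $\{[y^{*n}]\}$. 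From these term-by-term inclusions I would deduce that the limiting projections $\inf_n[y^n]$ and $\inf_n[y^{*n}]$, hence $p_u=\inf_n\{[y^n],[y^{*n}]\}$, $p_{is}$, $p_{cis}$ and their complement $p_t$, are invariant under and ultimately commute with both $x_1$ and $x_2$. I expect this passage from the termwise range--inclusions to honest commutation of the infima to be the hardest step, since in a Baer $*$-ring the infimum of a family of projections does not behave transparently under multiplication; the admissible tools are the equivalence $ab=0\Leftrightarrow a[b]=0$ and the properness of the involution.

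Once commutation is in hand the structural statements assemble at once. On $p_u$ the compression of $y$ is unitary, so the lemma makes each $x_ip_u$ unitary. On $q_s:=p_u+p_{is}=\inf_n[y^{*n}]$ the compression of $y$ is an isometry (unitary $\oplus$ unilateral shift), so each $x_iq_s$ is an isometry; as $p_u$ is a reducing summand of $x_iq_s$, restriction to $p_{is}$ keeps $x_ip_{is}$ an isometry, a direct summand of an isometry being an isometry. Dually, on $q_b:=p_u+p_{cis}=\inf_n[y^n]$ the compression of $y$ is a co-isometry and one obtains $x_ip_{cis}$ co-isometric. Combined with the already-established statement about $p_t$, this proves the theorem.
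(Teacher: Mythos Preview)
Your global plan coincides with the paper's: identify $p_u,p_{is},p_{cis},p_t$ with the Halmos--Wallen projections of $y=x_1x_2$, use Lemma~\ref{A2} to promote commutation from $y$ to each $x_i$, and then read off the block types. The commutation sketch is on target; the paper carries it out exactly as you anticipate, showing $[x_1\widetilde p_{cis}]\le[x_1[y^n]]=[y^nx_1]\le[y^n]$ for one direction and invoking Lemma~\ref{A2}(2) for the adjoint direction.

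The genuine gap is your ``sandwich'' lemma. From $z=zx_2^*x_1^*x_1x_2z$ you deduce $z\le zx_2^*x_2z\le z$ in the positive-cone sense and then conclude $zx_2^*x_2z=z$. That last step requires antisymmetry of the positive cone, which is \emph{not} assumed in this theorem (it is only imposed in Section~4). The paper's own Remark shows antisymmetry can fail: in $M_2(\mathbb F_3)$ a nonzero projection $p$ satisfies $-p=2p=p+p\ge 0$. Concretely, writing $e=x_1^*x_1$, $f=x_2^*x_2$, your hypotheses give $(1-f)z=-zx_2^*(1-e)x_2z$, so the projection $(1-f)z$ equals $-c^*c$ with $c=(1-e)x_2z$; in a general Baer $*$-ring this does not force $(1-f)z=0$.

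The fix is to drop the positivity argument altogether and argue, as the paper does, purely in the projection lattice. Since $\widetilde p_{cis}=\inf_n[x_1^nx_2^n]\le[x_1x_2]\le[x_1]$, one has $\widetilde p_{cis}\,x_1x_1^*=\widetilde p_{cis}$, so (using the commutation already established) $x_1\widetilde p_{cis}$ is a co-isometry in $\A_{\widetilde p_{cis}}$; the same with $x_2$. Dually $\widetilde p_{is}\le[x_1^*x_2^*]=[x_1^*x_2^*]\le[x_i^*]$ makes each $x_i\widetilde p_{is}$ an isometry, and combining both on $p_u=\widetilde p_{is}\wedge\widetilde p_{cis}$ gives unitarity. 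This replaces your sandwich lemma with a one-line projection inequality and removes the dependence on antisymmetry.
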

\begin{proof} Let us put,
\[\tilde{p}_{is}=\inf_{n\geq0}\big\{[x_1^{*n}x_2^{*n}]~~~,~~~\tilde{p}_{cis}=\inf_{n\geq0}[x_1^nx_2^n].\]
\medskip \\
{\bf Claim.}{\it  We have
\begin{itemize}
\item[{(a)}] Both projections $\widetilde{p}_{is}$ and $\widetilde{p}_{cis}$ commute with $x_1$ and  $x_2$.
\item[{(b)}]  $\widetilde{p}_{is}$ [resp. $\widetilde{p}_{cis}$] is  the largest projection commuting with both $x_i$'s  such that $(\widetilde{p}_{is}x_1,\widetilde{p}_{is}x_2)$ [resp. ($\widetilde{p}_{cis}x_1,\widetilde{p}_{cis}x_2)$   form a pair of isometric elements in $\A_{\tilde{p}_{is}}$ [resp. co-isometry elements in $\A_{\tilde{p}_{cis}}$].
\end{itemize}}
{\it Proof of Claim.}  We get the assertions  concerning the projection $\widetilde{p}_{cis}$, the other one is similarly proved.

(a) Note that
\[
x_1\widetilde{p}_{cis}=\widetilde{p}_{cis}x_1\Leftrightarrow
\begin{cases}
\widetilde{p}_{cis}x_1\widetilde{p}_{cis}=x_1\widetilde{p}_{cis},\\
\widetilde{p}_{cis}x_1^*\widetilde{p}_{cis}=x_1^*\widetilde{p}_{cis}.
\end{cases}
\]
For every $n\geq1$,
\[[x_1\widetilde{p}_{cis}]\leq  [x_1[x_1^nx_2^n]]=[x_1x_1^nx_2^n]=[x_1^nx_2^n x_1]=[x_1^nx_2^n [x_1]]\leq  [x_1^nx_2^n].\]
It yields $[x_1\widetilde{p}_{cis}]\leq  \inf [x_1^{n}x_2^{n}]=p_{cis}$, equivalently  $\widetilde{p}_{cis}x_1\widetilde{p}_{cis}=x_1p_{cis}.$ Using the second part of Lemma \ref{A2},
\[[x_1^*\widetilde{p}_{cis}]\leq  [x_1^*[x_1^nx_2^n]]
                     \leq [x_1^{n-1}x_2^{n-1}].\]
Taking infimum,
\[[x_1^*\widetilde{p}_{cis}]\leq  \inf [x_1^{n-1}x_2^{n-1}]=\widetilde{p}_{cis}.\]
It means that $\widetilde{p}_{cis}x_1^*\widetilde{p}_{cis}=x_1^*\widetilde{p}_{cis}.$  Similarly, the result is obtained for $x_2$.

(b) Note that  $\widetilde{p}_{cis}\leq [x_1x_2]\leq [x_1]$. Thus, $\widetilde{p}_{cis}x_1x_1^*=\widetilde{p}_{cis}$ that means $\widetilde{p}_{cis}x_1$ is a co-isometry.
 It remains to prove that $\widetilde{p}_{cis}$ is the largest one. Suppose that $q$ is a projection commuting with both $x_i$'s such that $(qx_1,qx_2)$ forms a pair of co-isometry elements in $\A_q$. Note that,
 \[q[x_1^nx_2^n]=qx_1^nx_2^nx^{*n}_2 x^{*n}_1=x_1^nqx_2^nx^{*n}_2 x^{*n}_1=x_1^nq x^{*n}_1=q.\]
Consequently $q\leq  \inf_{n\geq 0} [x_1^{n}x_2^{n}]=\widetilde{p}_{cis}$.
\medskip \\
The claim guarantees that    $p_{u}, p_{is}$ and, $p_{cis}$ are the largest projections commuting with $x_j$'s such that the compression of the pair $(x_1,x_2)$ to projections $p_{u}, p_{is}$ and, $p_{cis}$ enjoy the desired properties.
\medskip \\
Let us apply  Theorem \cite[Theorem 3.12]{BEE2} for the power partial isometry $x_1x_2$. Then the     claim directs us to concluded that the compression of $x_1x_2$ to $p_t$ is a direct sum of truncated shifts in the corner subring $\A_{p_t}$.
\end{proof}
\begin{remark}
As we observed, the decomposition given in Theorem (\ref{ppi}) was accomplished when $x_1x_2$ is a power partial isometry.
As well as  being a doubly commuting case, if either $x_1$ or $x_2$ are isometry [resp. coisometry] then,  $x_1x_2$ will also be  a power partial isometry.
\end{remark}

Although the product of a commuting pair of partial isometry  is not necessarily a power partial isometry, the largest projection on which the case holds is computable. It determines the corner Baer $*$-ring in which Theorem \ref{ppi} always holds.
\begin{proposition}
Let $(x_1,x_2)$ be a pair of commuting power partial isometries. We put,
\[p=\sup\{q:  qx_i=x_iq~~(i=1,2),~q\leq  1-[[x_1^n][x_2^{*n}]-[x_2^{*n}][x_1^n]]\}\]
Then $p$ is the largest projection commuting with both $x_js$ such that the compression  $x_1x_2p$ is a power partial isometry in $\A_{p}$
\end{proposition}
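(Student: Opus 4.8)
The plan is to translate the membership condition defining $p$ into a family of commutation relations, and to recognize the hypothesis ``$x_1x_2q$ is a power partial isometry'' as exactly that same family of relations. Write $c_n=[x_1^n][x_2^{*n}]-[x_2^{*n}][x_1^n]$, so that the defining condition reads $q\le 1-[c_n]$ for every $n$, and let $S$ be the set of projections commuting with $x_1,x_2$ and satisfying all these inequalities, so that $p=\sup S$. Since a projection $q$ with $qx_i=x_iq$ automatically commutes with $x_i^*$, hence with $x_1^n$, $x_2^{*n}$ and with the left projections $[x_1^n]$, $[x_2^{*n}]$, I would first note (using $ab=0\Leftrightarrow a[b]=0$) that for such $q$ the inequality $q\le 1-[c_n]$ is equivalent to $qc_n=0$, i.e. to the commutation $([x_1^n]q)([x_2^{*n}]q)=([x_2^{*n}]q)([x_1^n]q)$ inside the corner $\A_q$.

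The engine of the argument is the classical product rule for partial isometries, which I would establish directly in a Baer $*$-ring $\mathcal{B}$: if $a,b$ and $ab$ are all partial isometries, then $a^*a$ commutes with $bb^*$, and conversely. The forward implication is the delicate one. Since $ab$ is a partial isometry, $(ab)^*(ab)=b^*(a^*a)b$ is idempotent, and expanding $(b^*(a^*a)b)^2=b^*(a^*a)b$ rearranges to $b^*(a^*a)(1-bb^*)(a^*a)b=0$. Setting $c=(1-bb^*)(a^*a)b$, this says $c^*c=0$, so properness of the involution forces $c=0$; multiplying on the right by $b^*$ and taking adjoints then yields $(a^*a)(bb^*)=(bb^*)(a^*a)$. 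It is precisely properness, available in every corner $\A_q$, that here replaces the norm and weak-topology arguments of the operator-theoretic proof.

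With the product rule in hand the two halves of the statement follow. For the fact that $x_1x_2p$ is a power partial isometry, I would first check that $p\in S$: the inequalities $p\le 1-[c_n]$ hold because each $1-[c_n]$ is an upper bound of $S$, and $p$ commutes with $x_1,x_2$ because the supremum of projections lying in the commutant of the self-adjoint set $\{x_1,x_2,x_1^*,x_2^*\}$ again lies in that commutant (a standard property of Baer $*$-rings). Then, for each $n$, the projections $[x_2^{*n}]p$ and $[x_1^n]p$ commute, so the converse half of the product rule applied to $a=x_2^np$ and $b=x_1^np$ (both partial isometries in $\A_p$) shows that $(x_1x_2p)^n=x_2^nx_1^np$ is a partial isometry in $\A_p$; the identities $[x_1^n]x_1^n=x_1^n$ and $x_2^n[x_2^{*n}]=x_2^n$ make this a one-line verification.

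Finally, for maximality, suppose $q$ commutes with $x_1,x_2$ and $x_1x_2q$ is a power partial isometry in $\A_q$. For each $n$ the element $(x_1x_2q)^n=(x_2^nq)(x_1^nq)$ is then a partial isometry in $\A_q$, with $x_2^nq$ and $x_1^nq$ partial isometries; the forward product rule applied in $\mathcal{B}=\A_q$ shows that $(x_2^nq)^*(x_2^nq)=[x_2^{*n}]q$ commutes with $(x_1^nq)(x_1^nq)^*=[x_1^n]q$, which is exactly $qc_n=0$. Hence $q\in S$ and $q\le p$. I expect the two genuine obstacles to be the forward direction of the product rule and the claim that $\sup S$ still commutes with $x_1$ and $x_2$: these are the two points where the classical proof leans on topology, and where one must instead invoke properness of the involution together with the Baer property of commutants.
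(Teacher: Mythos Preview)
Your approach is correct and is essentially the paper's own argument: reduce $q\le 1-[c_n]$ to the commutation $q[x_1^n][x_2^{*n}]=q[x_2^{*n}][x_1^n]$, and identify this with the criterion for $(x_1x_2q)^n=(x_2^nq)(x_1^nq)$ to be a partial isometry in $\A_q$. The paper's proof is much terser---it simply asserts that $p$ commutes with the $x_j$ and dismisses maximality as ``trivial''---so your explicit product-rule lemma (via properness of the involution) and your remark that suprema of projections in a self-adjoint commutant remain there in fact supply details the paper omits.
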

\begin{proof}
We have that
\begin{align*}
p\leq  1-[x_1^nx_1^{*n}x_2^{*n}x_2^n-x_2^{*n}x_2^nx_1^nx_1^{*n}] &\Leftrightarrow p[x_1^nx_1^{*n}x_2^{*n}x_2^n-x_2^{*n}x_2^nx_1^nx_1^{*n}]=0
                      \\&\Leftrightarrow px_1^nx_1^{*n}x_2^{*n}x_2^n=px_2^{*n}x_2^nx_1^nx_1^{*n}.
\end{align*}
Note that $p$ commutes with both $x_js$. Thus, $px_1^nx_2^n$ is a power partial isometry. Trivially, $p$ is the largest one with these properties.
\end{proof}


\section{Decomposition of a pair of contractions}

The class of contractive operators in $\BH$ is much poorer than  power partial isometries. Thus, less information is expected to extract  from the decomposition concerning  contractive operators.

We recall that $x\in \A$ is a contraction if $1-xx^*$ is a positive element, equivalently $1-x^*x$ is positive (see \cite{BEE2}).

\begin{definition} (i) We say $\A$ has {\it antisymmetric property} provided that $\A_+$, the positive cone of $\A$, is proper; equivalently,  any finite sum of positive elements   $x_1+\cdots+x_n$ is non-zero unless  $x_i$s are all $0$.
\noindent (ii) We say that $\A$ is {\it smooth} if  $\A_+=\{x^*x\ :\ x\in \A\}$.
\end{definition}

To simplify, we put $x^n=x^{*-n}$ for negative integers.
\begin{theorem}\label{c}{\bf(Nagy-Fioas-Langer decomposition in Baer $*$-rings \cite{BEE2})}
Suppose that $\A$ is either smooth or antisymmetric. For a given contraction $x$,
let $p_u=\inf_{n\in \mathbb{Z}}q_n$ where $q_n= 1-[1-x^{*n}x^n]$.
Then $p_u$ is the largest projection commuting with $x$ such that the compression of $x$ to $p_u$ is a unitary element in the corner sub-ring $\A_{p_u}$.
\end{theorem}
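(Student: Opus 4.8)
The plan is to prove the two halves of the statement separately: that $p_u$ is a genuine reducing projection on which $x$ restricts to a unitary, and that it is the largest such projection. I would first record the easy ingredients. A product of contractions is again a contraction (conjugation $a\mapsto bab^*$ preserves positivity), so every power $x^n$ ($n\in\mathbb Z$, with the stated convention) is a contraction, $1-x^{*n}x^n\ge 0$, and $q_n=1-[1-x^{*n}x^n]$ is well defined. Using the basic identity $ab=0\Leftrightarrow a[b]=0$, one sees that $q_n$ is precisely the largest projection $r$ with $r(1-x^{*n}x^n)=0$. Since $p_u\le q_n$ for every $n$, this gives the baseline relations
\[ p_ux^{*n}x^n=p_u=x^{*n}x^np_u,\qquad p_ux^nx^{*n}=p_u=x^nx^{*n}p_u\qquad(n\ge 0),\]
and in particular $p_ux^*x=p_u$ and $p_uxx^*=p_u$.

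For maximality, let $q$ be a projection commuting with $x$ with $xq$ unitary in $\A_q$. As $q$ commutes with $x$ we have $(xq)^n=x^nq$, and unitarity of $xq$ (hence of all its powers and of its inverse $x^*q$) unwinds to $qx^{*n}x^n=q$ for every $n\in\mathbb Z$. Thus $q(1-x^{*n}x^n)=0$, so $q\le q_n$ for all $n$, and finally $q\le\inf_n q_n=p_u$.

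The substantial part is that $p_u$ itself commutes with $x$; by the criterion recalled in the Preliminaries it suffices to prove $[xp_u]\le q_n$ and $[x^*p_u]\le q_n$ for every $n$. I would split according to the sign of $n$. For the negative indices the co-isometry relations telescope purely algebraically: using $x^*xp_u=p_u$ and $x^{m-1}x^{*(m-1)}p_u=p_u$ one computes
\[ x^mx^{*m}\,xp_u=x^m x^{*(m-1)}p_u=x\,x^{m-1}x^{*(m-1)}p_u=xp_u,\]
so $(1-x^mx^{*m})xp_u=0$; passing to left projections (via $cw=0\Rightarrow w^*c=0\Rightarrow w^*[c]=0\Rightarrow[c]w=0$) yields $[xp_u]\le q_{-m}$. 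For the positive indices the same telescoping fails, and this is exactly where the hypothesis must enter. Here I would instead evaluate the positive element
\[ p_ux^*(1-x^{*n}x^n)xp_u=p_ux^*xp_u-p_ux^{*(n+1)}x^{n+1}p_u=p_u-p_u=0,\]
using $x^*x^{*n}=x^{*(n+1)}$, $x^nx=x^{n+1}$ and the baseline relations. Writing $1-x^{*n}x^n=\sum_i z_i^*z_i$ (a single square in the smooth case), this reads $\sum_i(z_ixp_u)^*(z_ixp_u)=0$; smoothness, or antisymmetry together with properness of the involution, forces each $z_ixp_u=0$, whence $(1-x^{*n}x^n)xp_u=0$ and $[xp_u]\le q_n$. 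The argument for $[x^*p_u]\le q_n$ is symmetric, with the isometry and co-isometry relations exchanging roles. Hence $p_u$ commutes with $x$.

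With commutativity in hand, the baseline relations give in the corner $\A_{p_u}$ that $(xp_u)^*(xp_u)=p_ux^*xp_u=p_u$ and $(xp_u)(xp_u)^*=xx^*p_u=p_u$, so $xp_u$ is simultaneously an isometry and a co-isometry, i.e.\ unitary. The main obstacle is precisely the positive-index step: deducing from the vanishing of the positive element $p_ux^*(1-x^{*n}x^n)xp_u$ that $(1-x^{*n}x^n)xp_u=0$. Without control on the positive cone one cannot split the defining sum $\sum z_i^*z_i$, and it is here that smoothness (a single square root) or antisymmetry (a proper cone) is indispensable; every other step is representation-free ring manipulation.
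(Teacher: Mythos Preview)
The paper does not actually prove this theorem: it is quoted verbatim as a result from \cite{BEE2}, with no argument supplied here. So there is no in-paper proof to compare against.

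That said, your argument is correct and self-contained. You correctly identify $q_n$ as the largest projection annihilating $1-x^{*n}x^n$, obtain the baseline identities $p_u x^{*n}x^n=p_u=x^{*n}x^n p_u$ and their co-isometry counterparts, and deduce maximality by the obvious computation. For commutation you reduce to $[xp_u]\le q_n$ and $[x^*p_u]\le q_n$; the telescoping step handles one sign of $n$ purely algebraically, and for the other sign you compute $p_ux^*(1-x^{*n}x^n)xp_u=0$ and invoke smoothness or antisymmetry together with properness of the involution to conclude $(1-x^{*n}x^n)xp_u=0$. This is exactly the point where the hypothesis on $\A$ is needed, and you flag it correctly. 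The passage from $(1-x^mx^{*m})xp_u=0$ to $[1-x^mx^{*m}]\,xp_u=0$ uses that $1-x^mx^{*m}$ is self-adjoint, which you implicitly use via the chain $cw=0\Rightarrow w^*c=0\Rightarrow w^*[c]=0\Rightarrow [c]w=0$; this is fine. The only cosmetic point is that your opening remark about products of contractions (hence powers) being contractions is not strictly needed for the proof as written---the baseline relations already come from $p_u\le q_n$---but it does justify that each $1-x^{*n}x^n$ is positive, which you need when you write it as a sum of squares.
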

We say that a  contraction $x$ is completely non-unitary if there is no non-zero projection $q$ commuting with $x$ such that $xq$ is a unitary in $\A_q$. If we put $p_c=1-p_u$ then the compression $xp_c$ will be completely non-unitary in $\A_{p_c}$.

\begin{definition}
Let $(x_1,x_2)$ be a commuting pair of contractions. It enjoys NFL-decomposition if there exists a basis $Q$  making both $x_1$ and $x_2$ diagonalize  such that the compression of $x_i$'s to projections contained in $Q$ are either  unitary or completely non-unitary.
\end{definition}

We call  $Q_{NFL}=\{p_u,p_c\}$  the NFL-basis of  $x$. The following point is obtained straightforward.

\begin{proposition}
Assume that $\A$ is either smooth or antisymmetric. Let  $(x_1,x_2)$ be  a doubly commuting pair of contractions in $\A$. It  enjoys NFL-decomposition. Indeed, if
we put,
\[Q_{NFL}^{(x_1,x_2)}=\{p^{(1)}_{\alpha}p^{(2)}_{\beta}: \alpha,\beta\in\{u,c\}\},\]
where $\{p^{(i)}_u,p^{(i)}_c\}$ is the NFL-basis corresponding to $x_i$.
Then $Q$ is a basis in $\A$. Moreover it makes both  $x_i$'s diagonasable whose diagonal blocks are either unitary or completely non-unitary.
\end{proposition}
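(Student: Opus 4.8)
The plan is to reduce the whole statement to a single commutation fact: that every projection $p^{(1)}_\alpha$ of the first NFL-basis commutes with every projection $p^{(2)}_\beta$ of the second. Granting this, each $p^{(1)}_\alpha p^{(2)}_\beta$ is a projection, the four products are pairwise orthogonal (because $p^{(1)}_up^{(1)}_c=p^{(2)}_up^{(2)}_c=0$), and
\[\sum_{\alpha,\beta\in\{u,c\}}p^{(1)}_\alpha p^{(2)}_\beta=\Big(p^{(1)}_u+p^{(1)}_c\Big)\Big(p^{(2)}_u+p^{(2)}_c\Big)=1,\]
so $Q^{(x_1,x_2)}_{NFL}$ is a basis. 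The same commutation facts will also show each product commutes with $x_1$ and with $x_2$, so the basis block-diagonalizes the pair.

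The technical core is the commutation lemma: \emph{if $y\in\A$ commutes with $x_i$ and with $x_i^*$, then $y$ commutes with $p^{(i)}_u$}. Indeed, $y$ then commutes with each self-adjoint element $a_n=1-x_i^{*n}x_i^n$. For a self-adjoint $a$ the relation $ya=ay$ forces $y^*a=ay^*$ by taking adjoints; since $a(1-[a])=0$, the inclusion $yR(a)\subseteq R(a)$ gives $y(1-[a])\in R(a)=(1-[a])\A$, whence $[a]y(1-[a])=0$, and the same applied to $y^*$ gives, after taking adjoints, $(1-[a])y[a]=0$. Hence $[a]y=[a]y[a]=y[a]$, i.e. $y$ commutes with $[a]$. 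Applying this to $a=a_n$ shows $y$ commutes with $q^{(i)}_n=1-[a_n]$, and as commutation with a family of projections passes to their infimum (cf. \cite[Prop. 4.5]{Berb2} together with the attainment of infima in $\A$), $y$ commutes with $p^{(i)}_u=\inf_n q^{(i)}_n$, and therefore with $p^{(i)}_c$.

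I would then use the lemma twice. As $(x_1,x_2)$ is doubly commuting, both $x_2$ and $x_2^*$ commute with $x_1$ and with $x_1^*$, so the lemma yields that $x_2$ and $x_2^*$ commute with $p^{(1)}_\alpha$. Thus $p^{(1)}_\alpha$ commutes with $x_2$ and $x_2^*$, and a second application (now with $x_2$ in the role of $x_i$ and $y=p^{(1)}_\alpha$) gives $p^{(1)}_\alpha p^{(2)}_\beta=p^{(2)}_\beta p^{(1)}_\alpha$. Combined with the facts that $p^{(1)}_\alpha$ commutes with $x_1$ (Theorem \ref{c}) and with $x_2$, and $p^{(2)}_\beta$ with $x_2$ and with $x_1$, this delivers the basis and the simultaneous block-diagonalization announced in the first paragraph.

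It remains to identify the blocks, which is routine once one checks that unitarity and complete non-unitarity are inherited by commuting sub-corners. Put $r=p^{(1)}_\alpha p^{(2)}_\beta$. If $\alpha=u$ then $r\leq p^{(1)}_u$, and multiplying the unitarity relations $x_1x_1^*p^{(1)}_u=p^{(1)}_u=x_1^*x_1p^{(1)}_u$ by $r$ gives $x_1x_1^*r=r=x_1^*x_1r$, so $x_1r$ is unitary in $\A_r$. If $\alpha=c$ then any nonzero projection $q\leq r\leq p^{(1)}_c$ commuting with $x_1$ and making $x_1q$ unitary would contradict the complete non-unitarity of $x_1p^{(1)}_c$ furnished by Theorem \ref{c}; hence $x_1r$ is completely non-unitary in $\A_r$. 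The identical dichotomy governs $x_2$ according to $\beta$, so on each block of $Q^{(x_1,x_2)}_{NFL}$ each $x_i$ compresses to a unitary or a completely non-unitary element, as required. The main obstacle is the commutation lemma of the second paragraph; the remaining steps are formal.
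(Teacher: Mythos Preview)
Your proposal is correct and is the natural way to fill in the details. The paper itself does not supply a proof: it introduces the proposition with the remark that it ``is obtained straightforward'' and leaves it at that. Your commutation lemma (any $y$ commuting with $x_i$ and $x_i^*$ commutes with $p^{(i)}_u$, proved via $[a_n]$ and the passage to infima as in \cite[Prop.~4.5]{Berb2}) is exactly the device the paper invokes elsewhere (e.g.\ in the $(3)\to(2)$ step of Theorem~\ref{T1}), so your argument is in the spirit of the surrounding text even though nothing is written out here.
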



\begin{theorem}
Suppose that $\A$ is either smooth or antisymmetric. Let $(x_1,x_2)$ be a commuting pair of contractions in $\A$.
We denote by $p_d$ the largest projection satisfying in the following relations.
\[ px_i=x_ip ~~,~~ p\leq 1-[x_2x_1^*-x_1^*x_2].\]
Then $p_d$ is the largest projection commuting with both $x_1$ and $x_2$ such that the pair $(x_1p_d,x_2p_d)$ is doubly commuting. Moreover $(x_1p_d,x_2p_d)$  enjoys NFL-decomposition.
\end{theorem}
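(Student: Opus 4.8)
The plan is to recast the two defining constraints of $p_d$ as an exact description of the projections that induce double commutativity, and only afterwards to feed the resulting compressed pair into the doubly commuting Nagy--Foia\c{s}--Langer decomposition established above.

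First I would fix a projection $p$ commuting with both $x_1$ and $x_2$ and compute the obstruction to double commutativity of the compressed pair $(x_1p,x_2p)$ inside the corner $\A_p$. Since the involution of $\A_p$ is the restriction of that of $\A$ and $p$ is central for $x_1,x_2$, the adjoint of $x_2p$ in $\A_p$ is $x_2^*p$, so $(x_1p,x_2p)$ is doubly commuting precisely when $(x_1x_2^*-x_2^*x_1)p=0$ (the plain commutativity being automatic). Taking adjoints and using that $p$ is self-adjoint turns this into $p(x_2x_1^*-x_1^*x_2)=0$, and the identity $ab=0\Leftrightarrow a[b]=0$ then rewrites it as $p\le 1-[x_2x_1^*-x_1^*x_2]$. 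Thus the set of projections appearing in the definition of $p_d$ is exactly the set of projections $q$ commuting with $x_1,x_2$ for which $(x_1q,x_2q)$ is doubly commuting.

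Next I would prove that this set has a largest element, namely $p_d$. It consists of the projections that lie in the relative commutant of the self-adjoint set $\{x_1,x_1^*,x_2,x_2^*\}$ and are dominated by the fixed projection $1-[x_2x_1^*-x_1^*x_2]$. Because the projections of the commutant of a self-adjoint subset form a complete sublattice of the projection lattice of $\A$, with suprema computed as in $\A$ (see \cite{Berb2}), the supremum $p_d$ of this family again commutes with $x_1,x_2$; and it stays below the fixed projection since the latter is an upper bound of the family. Hence $p_d$ belongs to the family and is its maximum, which is the first assertion.

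For the \emph{moreover} part the compressed pair $(x_1p_d,x_2p_d)$ is, by construction, a doubly commuting pair in $\A_{p_d}$, and each $x_ip_d$ is a contraction there because compressing the positive element $1-x_ix_i^*$ by $p_d$ yields the positive element $p_d(1-x_ix_i^*)p_d$. It then remains to apply the doubly commuting Nagy--Foia\c{s}--Langer decomposition proved above to $(x_1p_d,x_2p_d)$ in the Baer $*$-ring $\A_{p_d}$; this needs the hypothesis to descend to the corner. If $\A$ is antisymmetric this is immediate, since $(\A_{p_d})_+\subseteq\A_+$ forces the positive cone of $\A_{p_d}$ to be proper. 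The genuinely delicate point --- and the step I expect to be the main obstacle --- is the smooth case: one must show that every $z\in(\A_{p_d})_+$ is a single square $w^*w$ with $w\in\A_{p_d}$. I would handle it by producing a positive square root $s=s^*$ of $z$ inside $\A$; since $z\in\A_{p_d}$ one has $[s]=[z]\le p_d$, so $s=p_dsp_d\in\A_{p_d}$ and $z=s^*s$ lies in the corner cone. Alternatively, one can avoid corner smoothness altogether by building the basis directly from the $\A$-level projections $p^{(i)}_u$ of the single-contraction decomposition of each $x_i$ --- legitimate since $\A$ itself is smooth or antisymmetric --- forming the four projections $p^{(1)}_\alpha p^{(2)}_\beta p_d$, and checking, using only the double commutativity inside $\A_{p_d}$ together with the maximality of each $p^{(i)}_u$, that they constitute a basis of $\A_{p_d}$ whose blocks are unitary or completely non-unitary.
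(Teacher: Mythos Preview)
Your argument follows the same route as the paper: you translate the condition $p\le 1-[x_2x_1^*-x_1^*x_2]$ into $p(x_2x_1^*-x_1^*x_2)=0$, read this as double commutativity of the compressed pair, and then invoke the doubly commuting NFL decomposition. You are actually more thorough than the paper, which neither justifies the existence of the supremum $p_d$ nor checks that the smoothness/antisymmetry hypothesis passes to the corner $\A_{p_d}$ before applying the proposition.
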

\begin{proof}
Obviously  $p_d$ commutes with both $x_1$ and $x_2$. Let $p$ be a projection in $\A$. Note that,
\begin{align*}
p\leq  1-[x_2x_1^*-x_1^*x_2]
 &\Leftrightarrow p[x_2x_1^*-x_1^*x_2]=0\\
 &\Leftrightarrow p(x_2x_1^*-x_1^*x_2)=0.\\
\end{align*}
It implies that $(p_dx_1,p_dx_2)$ is doubly commuting. Applying Theorem (\ref{c}), we conclude that  $(p_dx_1,p_dx_2)$ enjoys NFL-decomposition.
\end{proof}



\bibliographystyle{amsplain}

\end{document}